\newcommand{\lr}{\longrightarrow}
\newcommand{\R}{\mathbb{R}}
\newcommand{\ita}{\textit}
\newcommand{\G}{{\mathfrak{g}}}
\newcommand{\h}{{\mathfrak{h}}}
\newcommand{\e}{\check{e}}
\newtheorem{Def}{Definition}
\newtheorem{theo}{Theorem}
\newtheorem{pr}{Proposition}
\newtheorem{Le}{Lemma}
\newtheorem{co}{Corollary}
\newtheorem{exem}{Example}
\newtheorem{remark}{Remark}
\title{ Symplectic Novikov  Lie Algebras}
\author{T. Ait Aissa and M. W. Mansouri\\Universit\'e Ibno Tofail\\ Facult\'e des Sciences. Laboratoire L.A.G.A\\ K\'enitra-Maroc\\e-mail: mansourimohammed.wadia@uit.ac.ma\\tarik.aitaissa @uit.ac.ma}
\begin{document}
\maketitle

\begin{abstract}
	It is well known that a symplectic Lie algebra admit a left symmetric product. In this work, we study the case where this product is Novikov, we show that the left-symmetric product associated to the symplectic Lie algebra is Novikov if and only if it is associative. In this case, the symplectic Lie algebra is called \ita{symplectic Novikov Lie algebra} (SNLA).  We show that any  SNLA is completely reducible two-step solvable. The classification of  four dimensional and  nilpotent   six dimensional, also some  methods for building large classes of examples, are presented. Finally,  we give a geometric study of the affine connection associated with an SNLA.
\end{abstract}

key words:
	Symplectic Lie algebras, Novikov algebra,  Symlectic connection.\\
AMS Subject Class (2010):  53D05,  17B60,  17B30.

\section{Introduction}
Novikov algebras arise in many areas of mathematics and physics. The Novikov structures appearing  in connection with the Poisson brackets of hydrodynamic type \cite{B-N}.
The study of Novikov algebras  was initiated by Zelmanov \cite{Z} then  developed by several authors \cite{B-D}, for  their classification problem see \cite{B-M}, \cite{B-M2} and \cite{B-G}.
The study of symplectic Lie groups (algebras) was developed by Bon-Yao Chu \cite{C}, Lichnerowitz, Medina and Ph. Revoy in \cite{L-M} and \cite{M-R}.

A finite-dimensional algebra $(\G,.)$ over $\R$ is called \ita{left-symmetric} if it satisfies the identity
\begin{equation}\label{1}
ass(x,y,z)=ass(y,x,z)\qquad \forall x,y,z \in\G,
\end{equation}
where $ass(x,y,z)$  denotes the associator $ass(x,y,z)=(x.y ).z-x.(y.z)$. In this case, the commutator $[x,y]= x.y-y.x$
defines a bracket that makes $\G$  a Lie algebra.
Clearly, each \ita{associative algebra} product (i.e. $ass(x,y,z)=0$, $\forall x,y,z\in\G$) is  a left symmetric product.

The left-symmetric algebra is called \ita{Novikov}, if 
\begin{equation}\label{2}
(x.y).z = (x.z).y\qquad \forall x,y,z \in\G,
\end{equation}
is satisfied. Let $\mathrm{L}_x$ and $\mathrm{R}_x$ denote the left and right multiplications by the element
$x\in\G$, respectively. The identity $(1)$ is now equivalent to the formula
\begin{equation*}
[\mathrm{L}_x,\mathrm{L}_y]=\mathrm{L}_{[x,y]}\qquad \forall x,y\in G
\end{equation*}
or in other words, the linear map $\mathrm{L} : \G \longrightarrow{End}(\G)$ is a representation of Lie algebras.

The identity $(2)$ is equivalent to each of the following identities
\begin{equation}\label{N}
[\mathrm{R}_x,\mathrm{R}_y]=0\qquad \forall x,y\in G
\end{equation}
\begin{equation}
\mathrm{L}_{x.y}=\mathrm{R}_y\circ \mathrm{L}_x\qquad \forall x,y\in G.
\end{equation}

For more details on left-symmetric algebras, we refer the reader to the survey article \cite{B} and the references therein (See as well \cite{H}).

A \ita{symplectic Lie algebra} $(\G,\omega)$ is a real Lie algebra with a skew-symmetric non-degenerate bilinear form 
$\omega$ such that for any $x,y,z\in\G$,
\[\omega([x,y],z)+\omega([y,z],x)+\omega([z,x],y)=0\]
this is to say, $\omega$ is a non-degenerate $2$-cocycle for the scalar cohomology of $\G$. We call $\omega$ a symplectic form of $\G$.  Note that in such case, $\G$ must be of even dimension. A fundamental example of symplectic Lie algebras are the Frobenius Lie algebras,  i.e. Lie algebras admitting a non-degenerate exact 2-form. Symplectic Lie algebras are in one-to-one correspondence with simply connected Lie groups with left invariant symplectic forms. The geometry of symplectic Lie groups is an active field of research.
There are several results on  the construction of symplectic Lie groups (see \cite{M-R}, \cite{D-M},  \cite{B-C}, \cite{F}) and some classifications in low dimension (\cite{O}, \cite{K-G-M}, \cite{F}).

It is known that  (see \cite{C} and \cite{M-R}) the product given by
\begin{equation}\label{5}
\omega(x.y,z)=-\omega(y,[x,z])\qquad\forall x,y\in\G
\end{equation}

induces a left symmetric algebra structure that satisfies $x.y-y.x=[x,y]$ on $\G$, we say that the left symmetric product is \ita{associated with the symplectic Lie algebra} $(\G,\omega)$.
Geometrically, this is equivalent to existence in a symplectic Lie group a \ita{left-invariant affine structure} (a left-invariant linear connection   with zero torsion and zero curvature). It is easy to  see \cite{M-R} that  the left symmetric product  associated with symplectic Lie algebra $(\G,\omega)$ satisfies
\begin{equation}\label{6}
\omega(x.y,z)=\omega(x,z.y)\qquad\forall x,y,z\in\G
\end{equation}
and 
\begin{equation}\label{7}
\omega(x.y,z)+\omega(y.z,x)+\omega(z.x,y)=0 \quad\forall x,y,z\in\G.
\end{equation}
The aim of this    work is to study the case where the left symmetric product associated with symplectic Lie algebra $(\G,\omega)$ is of Novikov.

\textit{Notations}:  For $\{e_i\}_{1\leq i\leq n}$  a basis of $\G$, we denote by $\{e^i\}_{1\leq i\leq n}$ the
dual basis on $\G^\ast$ and  $e^{ij}$  the two-form $e^i\wedge e^j\in\wedge^2\G^*$. For  an endomorphism $f : \G\lr \G$  we denote by $f^t : \G^*\lr \G^*$ its dual and by $f^*: \G\lr \G$ the symplectic adjoint endomorphism defined by
\begin{equation*}
\omega(f(x),y)=\omega(x,f^*(y)) \qquad\forall x,y\in\G.
\end{equation*}

The paper is organized as follows: In section $2$, we show that the left-symmetric product associated to $(\G,\omega)$ is Novikov if and only if it is associative and we give some algebraic consequences.  In section $3$, we give  a classification  of $4$-dimensional and  nilpotent  $6$-dimensional SNLA, then  we characterize the left-symmetric algebras $\h$ which defines ana SNLA SNLA structures on symplectic cotangent Lie algebra $(\h^\ast\times_{\ell}\h,\omega)$, this construction provides many examples of SNLA. In section $4$, we study the effect of symplectic reduction as well as the effect of symplectic oxidation on NSLAs and  we show that every SNLA is completely reducible. The last section studies the affine structure $\nabla$ associated with an SNLA we show that it is bi-invariant and completed if and only if the Lie algebra $\G$ is nilpotent. We also give a geometric study of the symplectic connection associated with $\nabla$.

\section{Definition and first properties}

\begin{Def}
	A \ita{symplectic Novikov  Lie algebra} (SNLA), is a symplectic  Lie algebra $(\G,\omega)$ with 	the associated  left symmetric product is Novikov.
\end{Def}
Our first result is the following
\begin{theo}\label{t1}
	Let $(\G,\omega)$ be a symplectic Lie algebra.  Then the left symmetric product associated with $(\G,\omega)$ is Novikov if  and only if it  is associative.
\end{theo}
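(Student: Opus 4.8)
The plan is to extract a single ``master identity'' from the compatibility relation \eqref{6}, applied twice, and then to feed it each of the two hypotheses in turn. First I would record that for all $x,y,z,w\in\G$ relation \eqref{6} gives $\omega((x.y).z,w)=\omega(x.y,w.z)=\omega(x,(w.z).y)$ and $\omega(x.(y.z),w)=\omega(x,w.(y.z))$, so that subtracting and writing $ass(x,y,z)=(x.y).z-x.(y.z)$,
\[\omega(ass(x,y,z),w)=\omega\big(x,\;(w.z).y-w.(y.z)\big)\qquad\forall x,y,z,w\in\G.\]
Call this identity $(\dagger)$; it holds in every symplectic Lie algebra. If the product is associative, the left-hand side of $(\dagger)$ vanishes identically, hence $(w.z).y=w.(y.z)$ by non-degeneracy of $\omega$; applying associativity once more on the right gives $w.(y.z)=(w.y).z$, so $(w.z).y=(w.y).z$ for all $w,y,z$, which is precisely the Novikov identity \eqref{2}. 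This is the short implication.

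For the converse, assume the product is Novikov. Then $(w.z).y=(w.y).z$, so the right-hand side of $(\dagger)$ becomes $(w.y).z-w.(y.z)=ass(w,y,z)$, i.e.
\[\omega(ass(x,y,z),w)=\omega(x,ass(w,y,z))\qquad\forall x,y,z,w\in\G.\]
Setting $T(x,y,z,w):=\omega(ass(x,y,z),w)$, this identity together with the skew-symmetry of $\omega$ reads $T(x,y,z,w)=-T(w,y,z,x)$, while left-symmetry \eqref{1} says $T(x,y,z,w)=T(y,x,z,w)$. Composing these two I would get $T(x,y,z,w)=T(y,w,z,x)$; since a transposition and a $3$-cycle generate the full symmetric group on the three ``outer'' arguments $x,y,w$, the form $T$ is invariant under all their permutations. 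In particular $T(x,y,z,w)=T(w,y,z,x)$, and comparing with $T(x,y,z,w)=-T(w,y,z,x)$ forces $2T\equiv 0$, hence $T\equiv 0$ over $\R$; non-degeneracy of $\omega$ then gives $ass\equiv 0$, so the product is associative.

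The main obstacle is this last direction: identity $(\dagger)$ by itself only expresses a kind of $\omega$-self-adjointness of the associator in its first and fourth slots, which is automatically consistent with $\omega$ being alternating and so yields nothing on its own; the real mechanism is the collision of that symmetry with left-symmetry, which upgrades two harmless symmetries into an $S_3$-symmetry incompatible with the sign carried by $\omega$. One point to flag explicitly is that the step $2T=0\Rightarrow T=0$ uses $\mathrm{char}\,\R\neq 2$. An alternative route for the direction ``associative $\Rightarrow$ Novikov'', possibly closer to the authors', is operator-theoretic: \eqref{6} makes every right multiplication $\mathrm{R}_x$ symplectically self-adjoint, the elementary identity $ass(x,y,z)-ass(x,z,y)=[\mathrm{R}_z,\mathrm{R}_y]x-x.[y,z]$ shows that associativity forces $[\mathrm{R}_z,\mathrm{R}_y]=\mathrm{R}_{[y,z]}$, and since the left side is $\omega$-skew-adjoint while the right side is $\omega$-self-adjoint, both vanish, which is \eqref{N}.
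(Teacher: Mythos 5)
Your proof is correct, and it reorganizes the argument in a genuinely different way from the paper even though the raw ingredients (identities (\ref{5})--(\ref{6}), left-symmetry and non-degeneracy of $\omega$) are the same. I checked the master identity $(\dagger)$: both sides follow from two applications of (\ref{6}), and the derived relation $T(x,y,z,w)=T(y,w,z,x)$ is indeed valid --- it is your two symmetries composed twice each, with total sign $(+1)(-1)(+1)(-1)=+1$, so $T$ is fixed by a transposition and a $3$-cycle, hence by all of $S_3$ on the outer slots, which collides with the $-1$ under $(1\,4)$ and forces $2T=0$. The paper instead runs two separate chains of equivalences: for Novikov $\Rightarrow$ associative it expands $\omega(z,[x.y,t]-[x,t.y])$ and uses the Novikov identity plus left-symmetry to collapse the bracket expression to $2\,ass(t,x,y)$; for the converse it reduces $\omega((x.y).z,t)=\omega(x.(y.z),t)$ to $\omega(y,(t.x).z-(t.z).x)=0$. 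Your single identity $(\dagger)$ packages both directions at once, and the $S_3$-character obstruction is a conceptual explanation of where the paper's explicit factor $2$ comes from (a multilinear form cannot be $+$-symmetric under one transposition of three slots and $-$-symmetric under another, all transpositions being conjugate); your remark about characteristic $\neq 2$ is exactly the paper's division by $2$. The operator-theoretic alternative you sketch for associative $\Rightarrow$ Novikov is also correct and arguably the cleanest of all: associativity gives $[\mathrm{R}_z,\mathrm{R}_y]=\mathrm{R}_{[y,z]}$, and since (\ref{6}) makes each $\mathrm{R}_x$ symplectically self-adjoint, the left side is $\omega$-skew-adjoint and the right side $\omega$-self-adjoint, so both vanish --- which yields (\ref{N}) and, as a bonus, the paper's identity (\ref{R8}) without further work.
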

\begin{proof} Suppose that the left symmetric product $"."$ associated with $(\G,\omega)$ is Novikov.
	By using $(\ref{5})$ and $(\ref{6})$, we have for every $x$, $y$, $z$, and $t\in\G$	
	\begin{eqnarray*}
		\omega((x.y).z,t)=\omega((x.z).y,t)&\Leftrightarrow&-\omega(z,[x.y,t])=\omega(x.z,t.y)\\
		&\Leftrightarrow&\omega(z,[x.y,t])=\omega(z,[x,t.y])\\
		&\Leftrightarrow&\omega(z,(x.y).t-t.(x.y)-x.(t.y)+(t.y).x)=0\\
		&\Leftrightarrow&\omega(z,(x.t).y-x.(t.y)+(t.x).y-t.(x.y))=0\\
		&\Leftrightarrow&\omega(z,2ass(t,x,y))=0.
	\end{eqnarray*}
	Then the product $"."$ is associative. Conversely, suppose that the left symmetric product associated with $(\G,\omega)$ is associative.  By using $(\ref{5})$ and $(\ref{6})$, we have for every $x$, $y$, $z$, and $t\in\G$
	\begin{eqnarray*}
		\omega((x.y).z,t)=\omega(x.(y.z),t)&\Leftrightarrow&\omega(y,[x,t.z])=\omega(y,[x,t].z)\\
		&\Leftrightarrow&\omega(y,x.(t.z)-(t.z).x-(x.t).z+(t.x).z)=0\\
		&\Leftrightarrow&\omega(y,(t.x).z-(t.z).x)=0.	
	\end{eqnarray*}
	Then the product $"."$ is Novikov.
\end{proof}

\begin{remark}
	The product $"."$ is Novikov and associative, this gives the formulas
	\begin{equation}\label{R8}
	\mathrm{R}_{[x,y]}=0, \qquad\forall x, y\in\G.
	\end{equation}
	\begin{equation}\label{R9}
	\mathrm{ad}_{[x,y]}=\mathrm{L}_{[x,y]}=[\mathrm{L}_x,\mathrm{L}_y],\qquad\forall x, y\in\G.
	\end{equation}
\end{remark}	

Note  that a Lie algebra admitting a Novikov structure must be solvable, see \cite{B-D}. In the case of SNLA we have the result.

\begin{co}
	Let $(\G,\omega)$ be an SNLA. Then, the associated
	Lie algebra  is two-step solvable.
\end{co}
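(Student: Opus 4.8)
The plan is to show that the second derived ideal vanishes, i.e. $[[\G,\G],[\G,\G]] = 0$. The key tool is the formula (\ref{R8}) from the preceding remark, namely $\mathrm{R}_{[x,y]} = 0$ for all $x,y \in \G$, which says that right multiplication by any element of $\mathcal{D}\G := [\G,\G]$ is the zero map. I would first translate this into a statement purely about the associative/Novikov product: for all $u \in [\G,\G]$ and all $v \in \G$ we have $v \cdot u = 0$. In particular, taking $v \in [\G,\G]$ as well, this gives $u \cdot v = 0$ for all $u, v \in [\G,\G]$ whenever $v \in [\G,\G]$ — but I actually want the product in both slots to give $0$, so let me be careful: from $v \cdot u = 0$ for all $v \in \G$ and $u \in [\G,\G]$, I immediately get $[\G,\G] \cdot [\G,\G] = 0$ (put $u \in [\G,\G]$, $v$ arbitrary, in particular $v \in [\G,\G]$).

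The second step is to recover the Lie bracket from the product. Since the product is Novikov and associative with $x \cdot y - y \cdot x = [x,y]$, for $u, v \in [\G,\G]$ we compute
\begin{equation*}
[u,v] = u \cdot v - v \cdot u = 0 - 0 = 0,
\end{equation*}
using $u \cdot v = 0$ (as $v \in [\G,\G]$, applying $v \cdot (\cdot)$... wait, I need $u \cdot v$, so I use the formula with the element of $[\G,\G]$ in the \emph{right} slot: $u \cdot v = 0$ because $v \in [\G,\G]$ means $\mathrm{R}_v = 0$, hence $u \cdot v = \mathrm{R}_v(u) = 0$; and $v \cdot u = 0$ because $u \in [\G,\G]$ means $\mathrm{R}_u = 0$, hence $v \cdot u = \mathrm{R}_u(v) = 0$). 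Therefore $[[\G,\G],[\G,\G]] = 0$, which is exactly the statement that $\G$ is two-step solvable.

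I do not expect any real obstacle here; the corollary is essentially an unwinding of (\ref{R8}). The only point requiring a little care is making sure that (\ref{R8}) is applied in the correct slot — it gives that elements of the derived ideal act trivially by \emph{right} multiplication, and since $[\G,\G]$ is spanned by commutators $[x,y]$ with $\mathrm{R}_{[x,y]} = 0$, linearity extends this to all of $[\G,\G]$. Once both $u \cdot v$ and $v \cdot u$ are seen to vanish for $u, v$ in the derived ideal, the commutator $[u,v]$ vanishes and we are done. Equivalently, one could note from (\ref{R9}) that $\mathrm{ad}_{[x,y]} = \mathrm{L}_{[x,y]}$ and combine with $\mathrm{R}_{[x,y]} = 0$, but the direct argument via the product is the cleanest.
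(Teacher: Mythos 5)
Your proof is correct and rests on the same algebraic facts as the paper's: the paper expands $[[x,y],[z,t]]$ into eight quadruple products and cancels them in pairs, which is precisely the identity $\mathrm{R}_{[z,t]}=0$ applied in disguise, while you invoke that identity (formula (\ref{R8}) from the preceding remark) directly to get $\G\cdot[\G,\G]=0$ and hence $[[\G,\G],[\G,\G]]=0$. Your packaging is a clean and slightly more economical presentation of the same argument, and your care about which slot $\mathrm{R}$ acts in is exactly right.
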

\begin{proof}
	For any $x$, $y$, $z$, $t\in\G$, by use of that the product associated  with $(\G,\omega)$ is Novikov and associative, we have
	\begin{eqnarray*}
		[[x,y],[z,t]]	&=&[x.y-y.x,z.t-t.z]\\
		&=&(x.y-y.x).(z.t-t.z)-(z.t-t.z).(x.y-y.x)\\
		&=&x.y.z.t-x.y.t.z-y.x.z.t+y.x.t.z-z.t.x.y+z.t.y.x+t.z.x.y-t.z.y.x\\
		&=&0.	
	\end{eqnarray*}
	This shows that the  Lie algebra $\G$ is two-step solvable.
\end{proof}
Recall that  an algebra $(\G,.)$  is called an \ita{LR-algebra}, if the product satisfies the identities
\[x.(y.z)=y.(x.z)\qquad and \qquad (x.y).z = (x.z).y\]
for all $x$, $y$, $z\in\G$. Indicating that in an LR-algebra the left and right multiplication operators commute i.e. $[\mathrm{L}_y,\mathrm{L}_x]=0$ and $[\mathrm{R}_y,\mathrm{R}_x]=0$. From (\ref{R8}) and (\ref{R9})  we obtain the following corollary
\begin{co}
	The SNLA $(\G,\omega)$, is  LR-algebra  if and only if  the Lie algebra $\G$ is two-step nilpotent.
\end{co}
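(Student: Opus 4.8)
The plan is to use the two characterizations already assembled in the excerpt: that an SNLA product is simultaneously Novikov and associative (Theorem~\ref{t1} and the Remark), and that an LR-algebra is exactly one in which both $[\mathrm{L}_x,\mathrm{L}_y]=0$ and $[\mathrm{R}_x,\mathrm{R}_y]=0$ hold for all $x,y\in\G$. Since the product is already Novikov, equation~(\ref{N}) gives $[\mathrm{R}_x,\mathrm{R}_y]=0$ for free, so the only extra condition needed for the SNLA to be an LR-algebra is $[\mathrm{L}_x,\mathrm{L}_y]=0$ for all $x,y$. Using the left-symmetry identity in the form $[\mathrm{L}_x,\mathrm{L}_y]=\mathrm{L}_{[x,y]}$, this is equivalent to $\mathrm{L}_{[x,y]}=0$ for all $x,y\in\G$.

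First I would show that the SNLA is an LR-algebra iff $\mathrm{L}_{[x,y]}=0$ for all $x,y$, as just explained. Then, by (\ref{R9}), $\mathrm{ad}_{[x,y]}=\mathrm{L}_{[x,y]}$, so $\mathrm{L}_{[x,y]}=0$ for all $x,y$ is equivalent to $\mathrm{ad}_{[x,y]}=0$ for all $x,y$, i.e. $[[x,y],z]=0$ for all $x,y,z\in\G$, which is precisely the statement that $[\G,\G]$ is central, i.e. that $\G$ is two-step nilpotent. That closes both implications at once.

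One subtlety I would want to check carefully is that the vanishing $\mathrm{L}_{[x,y]}=0$ really does capture \emph{all} of the LR condition and not just part of it: concretely, that $[\mathrm{L}_x,\mathrm{L}_y]=0$ is not only necessary but sufficient (together with the already-available $[\mathrm{R}_x,\mathrm{R}_y]=0$) for the two LR identities $x.(y.z)=y.(x.z)$ and $(x.y).z=(x.z).y$. The second identity is just the Novikov identity~(\ref{2}), which holds by hypothesis; the first is $(\mathrm{L}_x\mathrm{L}_y-\mathrm{L}_y\mathrm{L}_x)(z)=0$, i.e. exactly $[\mathrm{L}_x,\mathrm{L}_y]=0$. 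So the equivalence is clean. I expect the only real content—and the step most worth spelling out—is the passage from $\mathrm{L}_{[x,y]}=0$ to two-step nilpotency via (\ref{R9}); everything else is a direct unwinding of definitions and of the identities (\ref{N}), (\ref{R8}), (\ref{R9}) recorded just above the corollary.
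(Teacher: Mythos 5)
Your proposal is correct and follows exactly the route the paper intends: the paper gives no written proof but derives the corollary "from (\ref{R8}) and (\ref{R9})", i.e.\ the Novikov identity kills $[\mathrm{R}_x,\mathrm{R}_y]$, left-symmetry gives $[\mathrm{L}_x,\mathrm{L}_y]=\mathrm{L}_{[x,y]}$, and (\ref{R9}) identifies this with $\mathrm{ad}_{[x,y]}$, whose vanishing is two-step nilpotency. Your careful check that $[\mathrm{L}_x,\mathrm{L}_y]=0$ is the only missing LR condition is precisely the step the paper leaves implicit.
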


\section{Examples and Low dimensions SNLA}

We give a complete classification of  four dimensional SNLA and  six dimensional   nilpotent  SNLA.
\subsection{Low dimensions SNLA}
Start with the non-abelian tow-dimensional Lie algebra $\mathrm{aff}(\R)=span\{e_1,e_2\}$, the  symplectic Lie algebra $(\mathrm{aff}(\R),e^{12})$ with  $[e_1,e_2]=e_2$ is SNLA and the Novikov product is $e_1.e_1=-e_1$,\; $e_2.e_1=-e_2$.

We use the classification of four-dimensional symplectic  Lie algebras given by Ovando in \cite{C} and the  classification of six-dimensional symplectic nilpotent Lie algebras given in \cite{K-G-M} (see also \cite{F} for a more recent list). One obtains after a direct computation.
\begin{pr}
	The four-dimensional SNLAs are listed below:
	\begin{center}
		{\renewcommand*{\arraystretch}{1.3}
			\begin{tabular}{|c|l|c|}
				\hline
				Case&Novikov structure&Symplectic structure\\
				\hline
				$\mathfrak{r} \mathfrak{h}_3$ & $e_1.e_2=e_3$, $e_2.e_2=-e_4$ &$e^{14}+e^{23}$\\
				\hline
				$\mathfrak{rr}_{3,0}$& $e_1.e_1=-e_1$, $e_2.e_1=-e_2$ &$e^{12}+e^{34}$\\
				\hline
				\multirow{2}{*}{$\mathfrak{d}_{4,1}$}&$e_1.e_2=e_3$, $e_1.e_4= -e_1$, $e_2.e_4=-e_2$&\multirow{2}{*}{$e^{12}-e^{34}$}\\
				&$e_3.e_4=-e_3$, $e_4.e_2=-e_2$, $e_4.e_4=-e_4$&
				\\
				\hline
				\multirow{2}{*}{$\mathfrak{r}^\prime_2$}&$e_1.e_1=-e_1$, $e_1.e_2=- e_2$, $e_2.e_1=-e_2$, $e_2.e_2=e_1$&\multirow{2}{*}{$e^{14}+e^{23}$}\\
				&$e_3.e_1=-e_3$, $e_3.e_2=-e_4$, $e_4.e_1=-e_4$, $e_4.e_2=e_3$&\\
				\hline
		\end{tabular}}
	\end{center}
	The six-dimensional nilpotent  SNLAs   are listed below:
	
	\begin{center}
		{\renewcommand*{\arraystretch}{1.3}
			\begin{tabular}{|c|l|c|}
				\hline
				Case&Novikov structure&Symplectic structure\\
				\hline
				\multirow{3}{*}{$L_{6,18}$}&$e_1.e_2=\frac{\lambda}{\lambda-1}e_4$, $e_1.e_3=\frac{\lambda-1}{\lambda}e_5$ &\\
				&$e_2.e_1=\frac{1}{\lambda-1}e_4$, $e_2.e_3=(1-\lambda)e_6$&$e^{16}+\lambda e^{25}+(\lambda-1)e^{34}$,
				\\
				&$e_3.e_1=-\frac{1}{\lambda}e_5$, $e_3.e_2=-\lambda e_6$&  $\lambda\in\mathbb{R}-\{0,1\}$\\
				\hline
				\multirow{4}{*}{$L_{6,18}$}&$e_1.e_1=-\frac{1}{2\lambda}e_4$, $e_1.e_2=\frac{1}{2}e_4$, $e_2.e_1=-\frac{1}{2}e_4$&\\
				&$e_2.e_2=-\frac{1}{2\lambda}e_4$, $e_1.e_3=\frac{2\lambda}{\lambda^2+1}(\lambda e_5-e_6)$ &$e^{15}+\lambda e^{16}-\lambda e^{25}+e^{26}-2\lambda e^{34}$, \\
				&$e_2.e_3=\frac{2\lambda}{\lambda^2+1}(e_5+\lambda e_6)$, $e_3.e_1=\frac{\lambda^2-1}{\lambda^2+1}e_5-\frac{2\lambda}{\lambda^2+1}e_6$& $\lambda\in\mathbb{R}-\{0,1\}$\\
				&$e_3.e_2=\frac{2\lambda}{\lambda^2+1}e_5+\frac{\lambda^2-1}{\lambda^2+1}e_6$&
				\\
				\hline
				\multirow{3}{*}{$L_{6,18}$}&$e_1.e_2=-e_4-2e_5$, $e_1.e_3=-e_5$, &\\
				&$e_2.e_1=-2e_4-2e_5$, $e_2.e_2=\frac{1}{2}e_6$& $e^{35}-e^{16}+e^{25}+2e^{34}$\\
				&$e_2.e_3=\frac{1}{2}e_6$, $e_3.e_1=-2e_5$, $e_3.e_2=-\frac{1}{2}e_6$&\\
				\hline	\multirow{2}{*}{$L_{6,21}$}&$e_1.e_1=e_3$, $e_1.e_3=-e_5$, $e_2.e_1=-e_4$&
				\multirow{2}{*}{$\pm(e^{16}+e^{25}-e^{34})$}\\
				&$e_2.e_3=e_6$, $e_3.e_1=-e_5$, $e_4.e_1=-e_6$&\\
				
				\hline
				\multirow{1}{*}{$L_{6,23}$}&$e_1.e_1=e_4$, $e_1.e_2=e_5$, $e_2.e_2=-e_6$, $e_3.e_1=-e_6$&$e^{16}+e^{25}+e^{34}$\\\hline
				\multirow{1}{*}{$L_{6,23}$}&$e_1.e_2=e_5$, $e_1.e_3=e_6$, $e_2.e_3=\pm e_4$, $e_3.e_2=\pm e_4$&$e^{14}\pm(e^{26}+ e^{35})$\\
				\hline
				$L_{6,25}$&$e_1.e_1=e_5$, $e_2.e_1=-e_6$&$e^{16}+e^{24}-e^{35}$\\
				\hline
		\end{tabular}}
	\end{center}
\end{pr}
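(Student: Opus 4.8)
The plan is to reduce the statement to a finite case check against the existing classifications and then test the Novikov condition on each surviving case. By Theorem~\ref{t1}, being an SNLA is equivalent to the associated left‑symmetric product being associative, and by the first Corollary above the underlying Lie algebra is necessarily two‑step solvable, i.e. $[\G,\G]$ is abelian. So I would first take the classification of four‑dimensional symplectic Lie algebras \cite{C} (and the classification of six‑dimensional nilpotent symplectic Lie algebras \cite{K-G-M}, with the refined list in \cite{F}) and immediately discard every entry whose derived ideal is non‑abelian; the abelian Lie algebra carries the trivial, hence associative, product and is set aside as degenerate. What remains is a short explicit list of pairs $(\G,\omega)$, where $\omega$ ranges over the symplectic forms on each such $\G$, taken up to automorphism and rescaling.

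For each surviving pair, the associated product is uniquely determined by $\omega$ through $\omega(e_i.e_j,e_k)=-\omega(e_j,[e_i,e_k])$: non‑degeneracy of $\omega$ turns this into a linear system whose solution expresses the structure constants of the product in terms of those of $\G$ and the entries of $\omega$. Since it is automatic from Section~2 that this product is left‑symmetric with commutator $[\,,\,]$, the only thing left to verify is the single identity $(e_i.e_j).e_k=(e_i.e_k).e_j$ on basis triples — equivalently $ass(e_i,e_j,e_k)=0$ for all basis triples, by Theorem~\ref{t1}. I would keep exactly the pairs that pass this test and then normalize each survivor by a Lie algebra automorphism (and a scalar) to a canonical basis; the resulting normal forms are precisely the entries in the two tables. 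When $\G$ is two‑step nilpotent one may additionally invoke the LR‑structure of the second Corollary to shorten the verification.

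The main obstacle will be bookkeeping and the correct importation of the input data rather than any single hard argument: in dimension six there are many nilpotent Lie algebras, only some are symplectic, and each symplectic one typically carries several inequivalent symplectic forms, so the enumeration must be exhaustive and must not collapse a parameter family — such as the $\lambda\in\R\setminus\{0,1\}$ families appearing on $L_{6,18}$ — to a single point. A related subtlety, already visible from the three distinct $L_{6,18}$ rows, is that the isomorphism type of an SNLA is an invariant of the pair $(\G,\omega)$ and not of $\G$ alone: the same Lie algebra may support symplectic forms whose associated product is associative and others whose associated product is not, and two associative ones on the same $\G$ need not be equivalent. Once the case list is matched carefully against the cited classifications, each individual step is the routine finite computation referred to in the statement.
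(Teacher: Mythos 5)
Your proposal is correct and takes essentially the same route as the paper: the authors likewise import the classification of four-dimensional symplectic Lie algebras (Ovando) and of six-dimensional nilpotent symplectic Lie algebras (Khakimdjanov--Goze--Medina, with Fischer's list), solve $\omega(e_i.e_j,e_k)=-\omega(e_j,[e_i,e_k])$ for the associated product, and retain exactly the cases passing the Novikov (equivalently, by Theorem~\ref{t1}, associativity) test by direct computation. Your preliminary filtering by two-step solvability and the LR shortcut in the two-step nilpotent cases are sensible economies but do not alter the method.
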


\begin{remark}
	\begin{enumerate}
		\item The symplectic Lie algebras $\mathfrak{d}_{4,1}$ and $\mathfrak{r}^\prime_2$ are Frobenius Lie algebras (their symplectic forms are exacts).
		\item The Lie  algebras $L_{6,18}$, $L_{6,23}$ and $L_{6,25}$  are the only  two-step nilpotent SNLA. It is well known that \cite{B-D} all two-step nilpotent Lie algebra admits a Novikov structure, we notice that this result is no longer correct in the case of SNLA.
		\item Being NSLA is a property that depends on both the Lie structure and the symplectic structure. For example, the left-symmetric product associated with the symplectic Lie algebra $(\mathfrak{d}_{4,1},e^{12}-e^{34}+e^{24})$ is not Novikov.
	\end{enumerate}
\end{remark}
We obtain the following corollary 
\begin{co}
	Any nilpotent SNLA of dimension less than or equal to six is at most a $3$-step nilpotent Lie algebra.
\end{co}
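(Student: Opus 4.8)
The plan is to read the statement off the classification established in the previous Proposition. Since a symplectic Lie algebra is necessarily even-dimensional, a nilpotent SNLA of dimension at most six has dimension $2$, $4$ or $6$, and every such algebra occurs (up to isomorphism) among those displayed in the tables above — together with the abelian algebra $\R^2$ in dimension $2$. So the argument reduces to inspecting a finite list of algebras and, for each of them, reading off the lower central series $\G^1=[\G,\G]$, $\G^{k+1}=[\G,\G^k]$; what one wants to check is that $\G^3=0$ in every case.

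In dimension $2$ the only nilpotent Lie algebra is abelian, hence trivially at most $3$-step nilpotent. In dimension $4$ the algebras $\mathfrak{rr}_{3,0}$, $\mathfrak{d}_{4,1}$ and $\mathfrak{r}'_2$ are not nilpotent — from their Novikov products in the table one reads off a bracket of the form $[x,y]=\pm y$, so $\mathrm{ad}\,x$ is not nilpotent — which leaves $\mathfrak{rh}_3\cong\R\oplus\mathfrak{h}_3$ as the unique nilpotent four-dimensional SNLA; from $e_1.e_2=e_3$, $e_2.e_2=-e_4$ one finds $[e_1,e_2]=e_3$ with all other brackets of basis vectors zero, so $\G^1=\mathrm{span}\{e_3\}$ and $\G^2=0$, i.e. $\mathfrak{rh}_3$ is $2$-step nilpotent.

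For dimension six the nilpotent SNLAs appearing in the table are, as Lie algebras, $L_{6,18}$, $L_{6,21}$, $L_{6,23}$ and $L_{6,25}$. By the second item of Remark~2 above the algebras $L_{6,18}$, $L_{6,23}$ and $L_{6,25}$ are $2$-step nilpotent, so only $L_{6,21}$ needs a direct check: from its Novikov product $e_1.e_1=e_3$, $e_1.e_3=-e_5$, $e_2.e_1=-e_4$, $e_2.e_3=e_6$, $e_3.e_1=-e_5$, $e_4.e_1=-e_6$ one computes $[e_1,e_2]=e_4$, $[e_1,e_4]=e_6$, $[e_2,e_3]=e_6$ with every other bracket of basis vectors vanishing, whence $\G^1=\mathrm{span}\{e_4,e_6\}$, $\G^2=\mathrm{span}\{e_6\}$ and $\G^3=0$, so $L_{6,21}$ is exactly $3$-step nilpotent. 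Putting the three cases together, every nilpotent SNLA of dimension $\le 6$ is at most $3$-step nilpotent, and the bound is sharp because of $L_{6,21}$. The only real work here is bookkeeping — making sure the list of nilpotent algebras extracted from the tables is exhaustive and that the lower central series of $L_{6,21}$ is computed without a slip — so there is no conceptual obstacle beyond the classification itself.
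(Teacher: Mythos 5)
Your argument is correct and is exactly the paper's (implicit) one: the corollary is stated as a direct consequence of the classification in the preceding Proposition, and your case-by-case inspection — abelian in dimension $2$, the $2$-step algebra $\mathfrak{r}\mathfrak{h}_3$ in dimension $4$, and in dimension $6$ the $2$-step algebras $L_{6,18}$, $L_{6,23}$, $L_{6,25}$ plus the $3$-step algebra $L_{6,21}$ — is precisely the bookkeeping the paper leaves to the reader. Your lower central series computation for $L_{6,21}$ is right and also shows the bound is attained.
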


\subsection{Symplectic Novikov cotangent Lie algebra}
Let $(\h,.)$ be a left-symmetric algebra, 
for any $x\in\h$, $\ell_x,\,r_x :\h\lr\h$ denote the left and the right multiplication by $x$ given by $\ell_xy=x.y$ and $r_xy=y.x$, respectively  and $\mathrm{ad}_x$ is the endomorphism of $\h$ given by $\mathrm{ad}_xy= [x,y]$. 

A \ita{symplectic cotangent Lie algebra}  $(\h^\ast\times_{\ell}\h,\omega)$ of $\h$, is the vector space $\h^\ast\times\h$ endowed with the Lie bracket 
\begin{equation*}
[(\alpha,x),(\beta,y)]=\big(\ell^t_y \alpha-\ell^t_x \beta,[x,y]\big),\qquad x,y\in\h,\; and\;\alpha,\beta\in\h^*
\end{equation*}
and non-degenerate $2$-cocycle
\begin{equation*}
\omega\big((\alpha,x),(\beta,y)\big)=\beta(x)-\alpha(y),\qquad x,y\in\h,\; and\;\alpha,\beta\in\h^*.
\end{equation*}
\begin{pr}
	The symplectic cotangent Lie algebra  $(\h^\ast\times_{\ell}\h,\omega)$ is an SNLA if and only if $(\h,.)$ is Novikov associative algebra. 
	
\end{pr}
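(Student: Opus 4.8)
The plan is to compute explicitly the left-symmetric product associated with the symplectic Lie algebra $(\h^\ast\times_\ell\h,\omega)$ — call it $\bullet$ — and then to characterize when it is associative, since by Theorem \ref{t1} the cotangent Lie algebra is an SNLA precisely when $\bullet$ is associative. First I would substitute $U=(\alpha,x)$, $V=(\beta,y)$, $W=(\gamma,z)$ into the defining relation (\ref{5}), $\omega(U\bullet V,W)=-\omega(V,[U,W])$. Using the given Lie bracket together with $(\ell^t_z\alpha)(y)=\alpha(z.y)$, the right-hand side expands to $\gamma(x.y)-\alpha(z.y)+\beta([x,z])$, while, writing $U\bullet V=(\mu,u)$, the left-hand side equals $\gamma(u)-\mu(z)$. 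Since $(\gamma,z)$ is arbitrary and $\omega$ pairs the two slots of $\h^\ast\times\h$ against each other, one reads off $u=x.y$ and $\mu=r^t_y\alpha-\mathrm{ad}^t_x\beta$, that is,
\[
(\alpha,x)\bullet(\beta,y)=\big(r^t_y\alpha-\mathrm{ad}^t_x\beta,\;x.y\big).
\]

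Next I would apply this formula twice to compute the two iterated products $\big((\alpha,x)\bullet(\beta,y)\big)\bullet(\gamma,z)$ and $(\alpha,x)\bullet\big((\beta,y)\bullet(\gamma,z)\big)$ and compare them slot by slot. Their $\h$-components agree for all $x,y,z$ if and only if $(x.y).z=x.(y.z)$, i.e.\ $(\h,.)$ is associative. In the $\h^\ast$-components, separating the contributions carried by $\alpha$, $\beta$, $\gamma$ (legitimate since these are independent) and dualizing, one obtains the three operator identities
\[
r_yr_z=r_{y.z},\qquad [\mathrm{ad}_x,r_z]=0,\qquad \mathrm{ad}_y\mathrm{ad}_x=-\mathrm{ad}_{x.y}\qquad (x,y,z\in\h).
\]
For the ``only if'' direction it then suffices to observe that associativity (from the $\h$-component) combined with $r_yr_z=r_{y.z}$, which spells out as $(w.z).y=w.(y.z)$, forces $(w.y).z=(w.z).y$, the Novikov identity; hence $(\h,.)$ is associative and Novikov. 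For the ``if'' direction, assuming $(\h,.)$ associative and Novikov, I would verify all three operator identities by a short direct computation: expanding the Lie brackets and using repeatedly $(a.b).c=a.(b.c)=(a.c).b$ — and its consequence $a.[b,c]=0$ — collapses each expression to $0$. By Theorem \ref{t1}, $(\h^\ast\times_\ell\h,\omega)$ is then an SNLA.

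The only genuinely delicate point is the bookkeeping with dual maps and with the sign conventions of the pairing $\omega$, both when extracting the closed formula for $\bullet$ and when dualizing the identities read off from the $\h^\ast$-components; once that formula is pinned down correctly, the rest of the argument is a routine manipulation of the left-symmetric, associative and Novikov identities on $\h$ and requires no new idea.
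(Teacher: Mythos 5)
Your proposal is correct and follows essentially the same route as the paper: derive the explicit formula $(\alpha,x)\bullet(\beta,y)=\big(r^t_y\alpha-\mathrm{ad}^t_x\beta,\,x.y\big)$ from (\ref{5}) and then read off component-wise operator identities on $\h$. The only (harmless) variation is that you test associativity of $\bullet$ and invoke Theorem \ref{t1}, whereas the paper imposes the Novikov identity on $\bullet$ directly; the resulting conditions ($r_yr_z=r_{y.z}$, $[\mathrm{ad}_x,r_z]=0$, $\mathrm{ad}_y\mathrm{ad}_x=-\mathrm{ad}_{x.y}$ together with associativity of $\h$, versus the paper's $[r_x,r_y]=0$ and $ass(x,z,y)=0$) are equivalent, and your verification of both directions goes through.
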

\begin{proof}
	We denote by  $\odot$ the left symmetric product associated with  $(\h^\ast\times_{\ell}\h,\omega)$. 	
	For any $x,y,z\in\h$ and for any $\alpha,\beta,\gamma\in \h$, we have
	\begin{eqnarray*}
		\omega\big((\alpha,x)\odot (\beta,y),(\gamma,z)\big)&=&-\omega\big((\beta,y),[(\alpha,x),(\gamma,z)]\big)\\
		&=&-\omega\big((\beta,y),(\ell^t_z\alpha-\ell^t_x\gamma,[x,z])\big)\\
		&=&ad^t_x\beta(z)+\ell^t_x\gamma(y)-r^t_y\alpha(z).
	\end{eqnarray*}
	
	By asking $(\alpha,x)\odot (\beta,y)=(\lambda,s)$ and for $\gamma=0$, we get that $\lambda=-ad^t_x\beta+r^t_y\circ\alpha$, similarly, if $z=0$, we obtain that $s=\ell_x y=x.y$. Therefore
	\begin{equation*}
	(\alpha,x)\odot (\beta,y)=\big(-ad^t_x\beta+r^t_y\alpha,x. y\big).
	\end{equation*}
	On the other hand, the product $\odot$ is Novikov then
	\begin{eqnarray*}
		\big((\gamma,z)\odot (\beta,y)\big)\odot (\alpha,x)=\big((\gamma,z)\odot (\alpha,x)\big)\odot (\beta,y)
	\end{eqnarray*}
	a direct computation yields
	\begin{equation*}
	\Big(-ad^t_{z.y}\alpha-r^t_x\circ ad^t_z\beta+r_x^t\circ r_y^t(\gamma),(z.y).x\Big)=\Big(-ad^t_{z.x}\beta-r^t_y\circ ad^t_z\alpha+r_y^t\circ r_x^t(\gamma),(z.x).y\Big)
	\end{equation*}		
	then 
	\[[r_x,r_y]=0 \qquad and\qquad \mathrm{ad}_{x.y}z=\mathrm{ad}_y\circ r_x(z),\]  for all $x,y,z\in\h$. This is equivalent to,
	\[[r_x,r_y]=0 \qquad and\qquad ass(x,z,y)=0,\]
	for all $x,y,z\in\h$. Hence the result.
\end{proof}

\begin{exem}
	\begin{enumerate}
		\item All $n$-dimensional Novikov algebra with an abelian Lie algebra   define an $2n$-dimensional symplectic Novikov cotangent Lie algebra.

		\item The following table gives all three-dimensional Novikov associative algebras
		
		{\renewcommand*{\arraystretch}{1.3}
			\begin{tabular}{cl}
				$A_{3,2}\quad :$& $e_1^2=e_2$  \\
				$A_{3,3}\quad :$& $e_1^2=e_2$, $e_1.e_2=e_2.e_1=e_3$ \\
				$A_{3,4}\quad :$& $e_1^2=e_3$, $e_2^2=e_3$ \\
				$A_{3,5}\quad :$& $e_1^2=-e_3$, $e_2^2=e_3$ \\
				$g_{3,1}\quad :$& $e_1^2=e_2$, $e_1.e_2=(a+1)e_3$, $e_2.e_1=ae_3$ \\
				$g_{3,2}\quad :$& $e_1^2=ae_3$, $e_1.e_2=e_3$, $e_2^2=e_3$ \\
				$g_{3,3}\quad :$& $e_1.e_2=\frac12e_3$, $e_2.e_1=-\frac12e_3$.\\
		\end{tabular}}
	\end{enumerate}	
	For the list of three-dimensional real Novikov algebras  see \cite{B-G}. 
\end{exem}

Recall  the following well-known construction of Novikov algebra (see \cite{B-G}). This construction can generate many examples of SNLAs.
\begin{pr}
	Let $(\G,.)$ be an associative, commutative algebra and $D$ a derivation of $(\G,.)$,
	i.e., satisfying $D(x.y) = D(x).y + x.D(y)$. Then the product $x\ast y=x.D(y)$ is Novikov.
\end{pr}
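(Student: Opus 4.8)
The plan is to verify directly that the product $x \ast y = x.D(y)$ satisfies both the left-symmetric identity $(\ref{1})$ and the Novikov identity $(\ref{2})$, using only associativity and commutativity of $(\G,.)$ together with the Leibniz rule for $D$. First I would compute the associator of $\ast$. Writing everything in terms of the original product, we have $(x\ast y)\ast z = (x.D(y)).D(z)$ and $x\ast(y\ast z) = x.D(y.D(z)) = x.\big(D(y).D(z) + y.D^2(z)\big)$, so that, using associativity,
\[
ass_\ast(x,y,z) = x.D(y).D(z) - x.D(y).D(z) - x.y.D^2(z) = -x.y.D^2(z).
\]
Since $(\G,.)$ is commutative, $x.y.D^2(z) = y.x.D^2(z)$, and the expression is symmetric in $x$ and $y$; hence $ass_\ast(x,y,z) = ass_\ast(y,x,z)$, which is exactly $(\ref{1})$, so $\ast$ is left-symmetric.

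Next I would check the Novikov identity $(\ref{2})$, i.e. $(x\ast y)\ast z = (x\ast z)\ast y$. The left-hand side is $(x.D(y)).D(z) = x.D(y).D(z)$ by associativity, and the right-hand side is $(x.D(z)).D(y) = x.D(z).D(y)$; by commutativity these two elements coincide. Therefore $(\ref{2})$ holds and $(\G,\ast)$ is Novikov. Combining the two computations gives the claim. (As a side remark, which I would not need for the statement, the associator computation shows $ass_\ast(x,y,z) = -x.y.D^2(z)$, so $\ast$ is in fact associative precisely when $x.y.D^2(z)=0$ for all $x,y,z$.)

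The verification is entirely computational and short, so there is no serious obstacle; the only point requiring the slightest care is bookkeeping the Leibniz rule when expanding $D(y.D(z))$ and keeping track of which terms cancel via associativity versus which collapse via commutativity. I would present it as the two displayed identities above and conclude.
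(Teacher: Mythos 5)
Your verification is correct: the computation $(x\ast y)\ast z = x.D(y).D(z)$ together with $x\ast(y\ast z) = x.D(y).D(z) + x.y.D^2(z)$ gives $ass_\ast(x,y,z) = -x.y.D^2(z)$, which is symmetric in $x,y$ by commutativity, and the Novikov identity follows from commutativity of $D(y).D(z)$. The paper states this proposition without proof (recalling it as a well-known construction from the Novikov-algebra literature), so your direct check simply supplies the omitted routine argument; note also that your side remark that $\ast$ is associative exactly when $x.y.D^2(z)=0$ is precisely what the paper uses afterwards in its Corollary on the cotangent construction, so it is worth keeping.
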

In particular, it defines a Novikov structure on the Lie algebra  given by
\begin{equation*}
[x, y] := x \ast y-y\ast x = x.D(y)-y.D(x)\qquad\forall x,y\in\G.
\end{equation*}
Note that $D\in Der(\G,.)$ implies $D\in Der(\G,[\,,\,])$.
Thus, we obtain the following result.

\begin{co} \label{c2}
	Let $(\h,.) $ be an associative, commutative algebra  and $ D\in Der(\h,.)$. Then the symplectic cotangent Lie algebra $ (\h^\ast\times_{\ell}\h,\omega) $ is an SNLA if and only if 
	\begin{equation} \label{7}	
	x.y.D ^ 2 (z) = 0, \qquad \forall x, y, z \in \h. 
	\end{equation}
	In particular, if $ D^2=0 $, then $(\h^ \ast \times_{\ell}\h, \omega) $ is an SNLA.
\end{co}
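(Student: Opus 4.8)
The plan is to combine the earlier Proposition characterizing when $(\h^\ast\times_\ell\h,\omega)$ is an SNLA with the construction $x\ast y = x.D(y)$. Recall that the preceding Proposition says $(\h^\ast\times_\ell\h,\omega)$ is an SNLA if and only if the left-symmetric product on $\h$ is both Novikov and associative, i.e. $[r_x,r_y]=0$ and $ass(x,z,y)=0$ for all $x,y,z$. So the task reduces to translating these two conditions into a condition on $D$ when the product is $x\ast y = x.D(y)$ with $(\h,.)$ associative and commutative and $D\in\mathrm{Der}(\h,.)$. The previous Proposition already tells us $\ast$ is Novikov unconditionally, so the $[r_x,r_y]=0$ condition is automatic; the only remaining requirement is associativity of $\ast$.

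First I would compute the associator of $\ast$ directly. Write $(x\ast y)\ast z = (x.D(y)).D(z) = x.D(y).D(z)$ using associativity and commutativity of the dot product, and $x\ast(y\ast z) = x.D(y.D(z)) = x.\big(D(y).D(z) + y.D^2(z)\big) = x.D(y).D(z) + x.y.D^2(z)$. Hence $ass_\ast(x,y,z) = (x\ast y)\ast z - x\ast(y\ast z) = -\,x.y.D^2(z)$. Therefore $\ast$ is associative if and only if $x.y.D^2(z)=0$ for all $x,y,z\in\h$, which is exactly condition $(\ref{7})$. Combining this with the Novikov property (free from the earlier construction Proposition) and with the characterizing Proposition, $(\h^\ast\times_\ell\h,\omega)$ is an SNLA precisely when $(\ref{7})$ holds.

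For the final clause, if $D^2=0$ then $D^2(z)=0$ for every $z$, so $x.y.D^2(z)=0$ trivially and the SNLA condition is satisfied; this is immediate once the main equivalence is established. I do not expect a genuine obstacle here: the only subtlety is making sure the reduction goes through both directions of the earlier Proposition (both ``Novikov'' and ``associative'' must hold on $\h$), and checking that the Novikov half really is automatic, which it is since $x\ast y = x.D(y)$ is Novikov by the construction Proposition regardless of $D$. One should also note that $D\in\mathrm{Der}(\h,.)$ is used precisely in the step $D(y.D(z)) = D(y).D(z)+y.D^2(z)$, so the derivation hypothesis is essential and its role should be flagged explicitly in the write-up.
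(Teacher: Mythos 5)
Your proposal is correct and follows exactly the route the paper intends (the corollary is stated without proof as a consequence of the two preceding propositions): the Novikov property of $x\ast y=x.D(y)$ is automatic from the construction proposition, and your associator computation $ass_\ast(x,y,z)=-x.y.D^2(z)$ correctly identifies condition $(\ref{7})$ as equivalent to associativity of $\ast$, which by the cotangent-algebra characterization is the only remaining requirement for the SNLA property.
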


\section{Reduction and SNLA  Oxidation}

Let  $(\G,\omega)$ be a symplectic Lie algebra and $\h$  an isotropic (i.e.  $\omega_{D(\G)\times D(\G)}=0$ or in other words $\h\subseteq \h^\perp$) ideal. The
orthogonal $\h^\perp$ is a subalgebra of $\G$ which contains $\h$, and therefore $\omega$ descends
to a symplectic form $\overline{\omega}$ on the quotient Lie algebra $\overline{\G}=\h^\perp/\h$. The symplectic Lie algebra $(\overline{\G},\overline{\omega})$ is called the \ita{symplectic reduction} of $(\G,\omega)$ with respect to the isotropic ideal $\h$. Recall that, a symplectic Lie algebra  is called \ita{completely reducible} if it can be symplectically reduced (in several steps) to the trivial symplectic algebra. Note that, every four-dimensional symplectic Lie algebras,    nilpotent symplectic Lie algebras  and  completely solvable symplectic Lie algebras are  completely reducible. Moreover, an \ita{irreducible symplectic Lie algebra} is a symplectic Lie algebra which does not admit a reduction, that is,  it does not have a non-trivial isotropic ideal (for more details see \cite{B-C}).

\begin{Le}\label{L1}
	Let $(\G,\omega)$ be an SNLA, then all   symplectic reductions $(\overline{\G},\overline{\omega})$ are SNLA. 
\end{Le}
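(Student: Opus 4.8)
The plan is to transport the left-symmetric product of $(\G,\omega)$ through the reduction: I will show that the product defined by $(\ref{5})$ restricts to the subalgebra $\h^\perp$, that $\h$ is a two-sided ideal for this restricted product, and that the induced product on $\overline{\G}=\h^\perp/\h$ is exactly the left-symmetric product associated with $(\overline{\G},\overline{\omega})$. Since $\h^\perp$ is already a subalgebra containing $\h$, and since by non-degeneracy $\omega$ pairs $\h$ trivially with $\h^\perp$ while $(\h^\perp)^\perp=\h$, this reduces the whole statement to bookkeeping with orthogonality, after which associativity (hence the Novikov property, by Theorem \ref{t1}) is inherited automatically by a quotient.

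Concretely, first I would prove the three inclusions
\[\G\cdot\h^\perp\subseteq\h^\perp,\qquad \h^\perp\cdot\h\subseteq\h,\qquad \h\cdot\h^\perp\subseteq\h.\]
The first is immediate from $\omega(x\cdot y,h)=-\omega(y,[x,h])$, because $[x,h]\in\h$ ($\h$ an ideal) while $y\in\h^\perp$. The second follows by pairing with an arbitrary $z\in\h^\perp$: $\omega(x\cdot y,z)=-\omega(y,[x,z])=0$ since $[x,z]\in\h^\perp$ ($\h^\perp$ a subalgebra) and $y\in\h$, so $x\cdot y\in(\h^\perp)^\perp=\h$. For the third I would switch from $(\ref{5})$ to $(\ref{6})$: for $z\in\h^\perp$ one has $\omega(x\cdot y,z)=\omega(x,z\cdot y)=0$, because $z\cdot y\in\h^\perp$ by the first inclusion while $x\in\h$, hence again $x\cdot y\in\h$. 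Together these say precisely that $\h$ is a two-sided ideal of the algebra $(\h^\perp,\cdot)$, so $\cdot$ passes to a well-defined product on $\overline{\G}$.

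Next I would check that this induced product is the one attached to $(\overline{\G},\overline{\omega})$. Writing $\overline{x}$ for the class of $x\in\h^\perp$, one has $\overline{\omega}(\overline{x},\overline{y})=\omega(x,y)$ and $[\overline{x},\overline{y}]=\overline{[x,y]}$, hence for $x,y,z\in\h^\perp$,
\[\overline{\omega}(\overline{x\cdot y},\overline{z})=\omega(x\cdot y,z)=-\omega(y,[x,z])=-\overline{\omega}(\overline{y},[\overline{x},\overline{z}]),\]
and non-degeneracy of $\overline{\omega}$ forces $\overline{x\cdot y}$ to be the left-symmetric product of $\overline{x}$ and $\overline{y}$ in $(\overline{\G},\overline{\omega})$. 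Since the product on $\G$ is associative by Theorem \ref{t1}, its quotient is associative as well, and applying Theorem \ref{t1} once more shows $(\overline{\G},\overline{\omega})$ is an SNLA. The only place requiring a moment's thought is the third inclusion, where one must pass from $(\ref{5})$ to $(\ref{6})$ in order to bring $\h$ into the left slot of $\omega$; everything else is formal.
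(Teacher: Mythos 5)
Your proof is correct and follows the same route as the paper, which simply remarks without further detail that the left-symmetric product associated with $\overline{\omega}$ is $\overline{x}.\overline{y}=\overline{x.y}$ for $x,y\in\h^\perp$. You have supplied exactly the verification that remark requires (the three inclusions making $\h$ a two-sided ideal of $(\h^\perp,\cdot)$ and the identification of the induced product), so your argument is a fully detailed version of the paper's proof.
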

\begin{proof}
	It suffices to remark  that the left-symmetric  product associated with $\overline{\omega}$ is given by $\overline{x}.\overline{y}=\overline{x.y}$ for $x,y\in\h^\perp$.
\end{proof}

\begin{theo}\label{t2} An  irreducible no  trivial  symplectic Lie algebra is never an SNLA.
\end{theo}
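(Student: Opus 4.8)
The plan is to prove the contrapositive: every non-trivial SNLA $(\G,\omega)$ contains a non-zero isotropic ideal, hence admits a reduction and is not irreducible. First I would dispose of the abelian case, which is immediate — if $\G$ is abelian and non-zero then $\dim\G\geq 2$ and any line $\R v\subseteq\G$ is an ideal (by centrality) and isotropic (by dimension). So I would assume $\G$ non-abelian and set $\A=[\G,\G]$; since $\G$ is two-step solvable (see the corollary following Theorem~\ref{t1}), $\A$ is a \emph{non-zero abelian ideal} of $\G$. The rest of the argument extracts an isotropic ideal from $\A$, splitting according to whether $\omega$ restricts non-degenerately to $\A$.

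If $\omega|_{\A}$ is degenerate, I would take $\mathfrak{r}=\A\cap\A^{\perp}$, which is non-zero by assumption and visibly isotropic. It is also an ideal, by a one-line computation with the $2$-cocycle identity: for $w\in\mathfrak{r}$ and $x\in\G$ one has $[x,w]\in\A$ since $\A$ is an ideal, and for $v\in\A$ the cocycle relation $\omega([x,w],v)+\omega([w,v],x)+\omega([v,x],w)=0$ has vanishing middle term (as $[\A,\A]=0$) and vanishing last term (as $[v,x]\in\A$ and $w\in\A^{\perp}$), so $\omega([x,w],v)=0$; hence $[x,w]\in\A\cap\A^{\perp}=\mathfrak{r}$. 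This already produces a non-zero isotropic ideal, using nothing but that $\A$ is an abelian ideal.

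The case where $\omega|_{\A}$ is non-degenerate is the heart of the matter, and it is precisely where the Novikov (equivalently associative) hypothesis must be used. I would show $\A\subseteq Z(\G)$. For $u,v\in\A$ and $x\in\G$, identity (\ref{5}) gives $\omega(u.x,v)=-\omega(x,[u,v])=0$ since $\A$ is abelian, so $u.x\in\A^{\perp}$. On the other hand $u.x=[u,x]+x.u$ and $x.u=\mathrm{R}_u(x)=0$ by (\ref{R8}) because $u\in[\G,\G]$, whence $u.x=[u,x]\in\A$. Non-degeneracy of $\omega|_{\A}$ then forces $u.x\in\A\cap\A^{\perp}=\{0\}$, i.e. $[u,x]=0$ for all $u\in\A$, $x\in\G$. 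So $\A\subseteq Z(\G)$ is a non-zero central ideal, and any line in it is a central — hence isotropic — ideal.

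Combining the cases, a non-trivial symplectic Lie algebra that is an SNLA always carries a non-zero isotropic ideal, so an irreducible non-trivial symplectic Lie algebra is never an SNLA. The step I expect to be the main obstacle is the non-degenerate case: for a general symplectic Lie algebra, an abelian ideal on which $\omega$ restricts non-degenerately gives no reduction, so one genuinely needs the Novikov condition — in the sharp form $\mathrm{R}_{[x,y]}=0$ of (\ref{R8}) — to drive $[\G,\G]$ into the centre. (Alternatively, in that case one could first check from (\ref{5}), (\ref{6}) and (\ref{R8}) that $\A^{\perp}$ is itself an ideal, and then derive a contradiction from $\A\cap\A^{\perp}=0$, $\G=\A\oplus\A^{\perp}$ and $[\G,\G]=\A\subseteq\A^{\perp}$; either route closes the argument.)
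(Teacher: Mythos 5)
Your proof is correct, and it takes a genuinely different route from the paper. The paper argues directly on an irreducible symplectic Lie algebra by importing Theorem~3.16 of Baues--Cort\'es, which provides an explicit normal form ($\G$ is an orthogonal semidirect sum of an abelian subalgebra acting by infinitesimal rotations on the non-degenerate abelian ideal $D(\G)$); it then writes the associated left-symmetric product in an adapted basis and checks that the Novikov identity forces all the structure constants $\lambda^k_i,\overline{\lambda}^k_i$ to vanish. You instead prove the contrapositive from first principles: every non-trivial SNLA carries a non-zero isotropic ideal, using only the two-step solvability of $\G$, the identity $\mathrm{R}_{[x,y]}=0$ of (\ref{R8}) extended by linearity to all of $D(\G)$, the defining relation (\ref{5}), and the $2$-cocycle identity. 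This is self-contained, avoids the structure theorem entirely, and in fact directly establishes the stronger statement that the paper only records afterwards as a corollary (existence of an isotropic ideal, hence complete reducibility). One simplification you missed: your non-degenerate sub-case is vacuous, since $\mathrm{R}_{[x,y]}=0$ combined with (\ref{5}) gives $\omega([x,y],[z,t])=-\omega(z.[x,y],t)=0$, so $D(\G)$ is always \emph{totally} isotropic for an SNLA (this is precisely the paper's Lemma~2, stated right after the theorem); consequently $\omega|_{D(\G)}$ is degenerate whenever $D(\G)\neq 0$, and your whole argument collapses to ``$D(\G)$ is a non-zero isotropic ideal in the non-abelian case, and any line works in the abelian case.'' The argument you give in that sub-case is nonetheless valid, so nothing is broken --- it is merely redundant.
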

\begin{proof} Let $(\G,\omega)$ be a irreducible symplectic Lie algebra, Theorem $3.16$ \cite{B-C}  gives a characterization of irreducible symplectic Lie algebra, more precisely it shows that the commutator ideal $D(\G)$ is a maximal abelian ideal of $\G$, which is non-degenerate with respect to $\omega$, the symplectic Lie algebra $(\G,\omega)$ is an orthogonal semi-direct sum of	an abelian symplectic subalgebra $(\h,\omega_{|\h})$ and the ideal $(D(\G),\omega_{|D(\G)})$  and we can choose a basis  $\{f_1,\overline{f_1},...,f_h,\overline{f_h},e_{1,1},e_{2,1},...,e_{1,m},e_{2,m}\}$ of $\G$, such that $\{f_1,\overline{f_1},...,f_h,\overline{f_h}\}$ is  a basis  of $\h$ and $\{e_{1,1},e_{2,1},...,e_{1,m},e_{2,m}\}$
	is  a basis  of $D(\G)$. In this basis  the Lie bracket  is given for $\lambda^k_i$, $\overline{\lambda}^k_i\in\R$ by
	\begin{equation*}
	[f_i,e_{1,k}]=-\lambda^k_ie_{2,k} \qquad[f_i,e_{2,k}]=\lambda^k _ie_{1,k}
	\end{equation*}
	\begin{equation*}
	[\overline{f_i},e_{1,k}]=-\overline{\lambda}^k_ie_{2,k} \qquad[\overline{f_i},e_{2,k}]=\overline{\lambda}^k _ie_{1,k}
	\end{equation*}
	and the symplectic form is given by
	\begin{equation*}
	\omega=\sum\limits_{i=1}^h f^i\wedge \overline{f}^i+\sum\limits_{i=1}^me^{1,k}\wedge{e}^{2,k}.
	\end{equation*}
	From (\ref{5}) a direct computation yields the left-symmetric  product    associated with symplectic Lie algebra $(\G,\omega)$.
	\begin{equation*}
	\begin{cases}
	e_{1,k}.e_{1,k}=e_{2,k}.e_{2,k}=\sum\limits_{i=1}^m\lambda^k_i\overline{f_i}-\overline{\lambda}^k_if_i\\
	f_i.e_{1,k}=-\lambda^k_ie_{2,k},\quad\overline{f_i}.e_{1,k}=-\overline{\lambda}^k_ie_{2,k},\quad 1\leq i\leq h\,\;and\;\;1\leq k\leq m\\
	f_i.e_{2,k}=\lambda^k_ie_{1,k},\quad\overline{f_i}.e_{2,k}=\overline{\lambda}^k_ie_{1,k}.\\
	\end{cases}
	\end{equation*}
	For $1\leq j\leq h$ and $1\leq k\leq m$, we have
	\begin{eqnarray*}
		(f_j.e_{2,k}).e_{1,k}-(f_j.e_{1,k}).e_{2,k}&=&\lambda_j^ke_{1,k}.e_{1,k}+\lambda_j^ke_{2,k}.e_{2,k}\\
		&=&2\lambda_j^k\Big(\sum\limits_{i=1}^m\lambda^k_i\overline{f_i}-\overline{\lambda}^k_if_i\Big)\\
		&=&2\Big((\lambda_j^k)^2\overline{f_j}-\lambda_j^k\overline{\lambda}_j^kf_j\Big)+
		2\lambda_j^k\Big(\sum\limits_{\substack{i=1\\i\not=j}}^m\lambda^k_i\overline{f_i}-\overline{\lambda}^k_if_i\Big).
	\end{eqnarray*}
	Thus, if $(\G,\omega) $ is an SNLA we obtain  $\lambda_j^k=0$ for $1\leq j\leq h$ and $1\leq k\leq m$, a seminar calculation of $(\overline{f}_j.e_{2,k}).e_{1,k}-(\overline{f}_j.e_{1,k}).e_{2,k}$ implies that $\overline{\lambda}_j^k=0$ for $1\leq j\leq h$ and $1\leq k\leq m$.  Then an SNLA  is irreducible if and only if it is trivial.
\end{proof}

According to lemma \ref{L1} we obtain the following corollary 
\begin{co}
	Every SNLA is	completely reducible in particular an SNLA admits an isotropic ideal.
\end{co}
The following Lemma is an immediate consequence of $(\ref{R8})$.

\begin{Le}
	Let $(\G,\omega)$ be an SNLA. Then the commutator ideal $D(\G)=[\G,\G]$ is isotrope.
\end{Le}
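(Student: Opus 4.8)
The plan is to show directly that $\omega(D(\G),D(\G))=0$ by combining the bi-invariance-type identity $(\ref{R8})$ with the defining relations $(\ref{5})$ and $(\ref{7})$ of the associated product. Concretely, take arbitrary $x,y,z,t\in\G$ and consider $\omega([x,y],[z,t])$. Using $(\ref{7})$ (the cyclic identity for the product) together with the fact that $[z,t]=z.t-t.z$, one rewrites $\omega([x,y],[z,t])$ in terms of quantities of the form $\omega(a.b,c)$ where one of the slots is a commutator. By $(\ref{5})$ one has $\omega(u.v,w)=-\omega(v,[u,w])$; applying this and then $(\ref{R8})$, which says $\mathrm{R}_{[z,t]}=0$, i.e. $w.[z,t]=0$ for all $w\in\G$, the relevant terms vanish. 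So the core of the argument is: express $\omega$ evaluated on a pair of commutators in a form that exhibits a right-multiplication by a commutator, and invoke $(\ref{R8})$.

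First I would recall that for an SNLA the product is both left-symmetric (hence $[\,,\,]$ is the commutator) and, by Theorem~\ref{t1}, associative and Novikov, so all of $(\ref{R8})$, $(\ref{R9})$, $(\ref{6})$, $(\ref{7})$ are available. Next I would write, for any $x,y,z,t\in\G$,
\begin{equation*}
\omega([x,y],[z,t]) = \omega([x,y], z.t) - \omega([x,y], t.z).
\end{equation*}
Then I would use $(\ref{6})$, $\omega(a.b,c)=\omega(a,c.b)$, in the form $\omega([x,y],z.t)=\omega(z,[x,y].t)$ — wait, more carefully: I apply $(\ref{6})$ with $a=z$, $b=t$, $c=[x,y]$ to get $\omega(z.t,[x,y]) = \omega(z,[x,y].t)$, hence $\omega([x,y],z.t) = -\omega(z,[x,y].t)$. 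But $[x,y].t = \mathrm{L}_{[x,y]}(t)$, which by $(\ref{R9})$ equals $\mathrm{ad}_{[x,y]}(t) = [[x,y],t]$; this is not obviously zero, so instead I route through the right multiplication: apply $(\ref{6})$ the other way, $\omega([x,y],z.t)=\omega([x,y], z.t)$ and rewrite $z.t$ is not the issue — the clean move is $\omega(a.b,c) = \omega(a,c.b)$ with $a=[x,y]$... that needs $[x,y]$ in the first product slot. So instead use $(\ref{5})$: $\omega([x,y], z.t)$; put $z.t = z.t$ and note $\omega(w, z.t)$: by $(\ref{6})$ with $a=w$? That requires a product in the first slot. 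The decisive identity is rather: $\omega(u.v, [z,t]) = -\omega(v,[u,[z,t]])$ is one route, but the slickest is via $(\ref{R8})$ after writing $[x,y]=x.y-y.x$ and expanding $\omega(x.y - y.x, [z,t])$ using $(\ref{6})$: $\omega(x.y,[z,t]) = \omega(x, [z,t].y)$ and $[z,t].y = \mathrm{L}_{[z,t]}y = [[z,t],y]$ again by $(\ref{R9})$ — so this reduces the claim to $\omega(x,[[z,t],y]) = \omega(y,[[z,t],x])$ for all $x,y$, i.e. to the statement that the bilinear form $(x,y)\mapsto \omega(x,[[z,t],y])$ is symmetric, which is exactly the 2-cocycle condition applied to the triple $([z,t],x,y)$ combined with $\omega([[z,t],x],y)$... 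Actually the cocycle identity gives $\omega([[z,t],x],y) + \omega([x,y],[z,t]) + \omega([y,[z,t]],x) = 0$, so $\omega([x,y],[z,t]) = \omega(x,[[z,t],y]) - \omega(y,[[z,t],x])$, and I must show the right side vanishes; by $(\ref{R9})$, $[[z,t],y] = \mathrm{R}_{y}$? No — by $(\ref{R8})$, $\mathrm{R}_{[z,t]} = 0$, meaning $y.[z,t] = 0$; and $[[z,t],y] = [z,t].y - y.[z,t] = [z,t].y$. So I need $\omega(x, [z,t].y) = \omega(y,[z,t].x)$; by $(\ref{6})$, $\omega(x,[z,t].y)$ is not directly $\omega$ of a product — but $\omega([z,t].y, x) = -\omega(x,[z,t].y)$ and $[z,t].y = \mathrm{L}_{[z,t]}y$. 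Apply $(\ref{6})$: $\omega(([z,t]).y, x) = \omega([z,t], x.y)$. So $\omega(x,[z,t].y) = -\omega([z,t],x.y)$ and likewise $\omega(y,[z,t].x) = -\omega([z,t],y.x)$, whence the difference equals $-\omega([z,t], x.y - y.x) = -\omega([z,t],[x,y]) = \omega([x,y],[z,t])$, giving $\omega([x,y],[z,t]) = 2\,\omega([x,y],[z,t])$, hence $\omega([x,y],[z,t]) = 0$.

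Since $D(\G)$ is spanned by brackets $[x,y]$ and $\omega$ is bilinear, this proves $\omega(D(\G),D(\G))=0$, i.e. $D(\G)$ is isotropic. The main obstacle is purely bookkeeping: choosing the right order in which to apply $(\ref{5})$, $(\ref{6})$, $(\ref{R8})$ and the cocycle identity so that the right-multiplication-by-a-commutator operator surfaces and can be killed by $(\ref{R8})$; there is no genuine difficulty beyond keeping signs straight, and in fact the cleanest write-up may simply invoke the Corollary that $\G$ is two-step solvable together with the observation that $\omega([x,y],[z,t]) = -\omega([z,t].[x,y] ,\cdot)$-type manipulations reduce everything to $[[x,y],[z,t]]$-free expressions — but the self-referential computation above ($\omega([x,y],[z,t]) = 2\omega([x,y],[z,t])$) is the shortest honest route and I would present that.
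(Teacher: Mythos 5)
Your overall strategy --- surface a right multiplication by a commutator and kill it with $(\ref{R8})$ --- is the right idea, and it is essentially the paper's: the paper treats the lemma as an immediate consequence of $(\ref{R8})$ via $(\ref{5})$, since $0=\omega(z.[x,y],w)=-\omega([x,y],[z,w])$ already gives $\omega(D(\G),D(\G))=0$ in one line, with no need for the cocycle identity or for $(\ref{6})$ at all. However, your written computation contains a sign error that makes the decisive step fail. Applying the cocycle identity to the triple $([z,t],x,y)$ gives $\omega([x,y],[z,t])=\omega(y,[[z,t],x])-\omega(x,[[z,t],y])$, which is the \emph{negative} of the expression you wrote down. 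With your sign, the subsequent chain $\omega(x,[z,t].y)-\omega(y,[z,t].x)=-\omega([z,t],[x,y])=\omega([x,y],[z,t])$ returns exactly the quantity you started from, so the argument as written establishes only the tautology $X=X$; and the identity $X=2X$ that you then assert does not follow from anything in your chain. With the corrected sign one instead obtains $\omega([x,y],[z,t])=-\omega([x,y],[z,t])$, hence $2\,\omega([x,y],[z,t])=0$ and the lemma follows. So the route is salvageable after fixing the sign, but you should either repair that step or, better, replace the whole computation by the direct one-line argument from $(\ref{R8})$ and $(\ref{5})$.
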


\begin{pr}
	Let $(\G,\omega)$ be a $p$-step nilpotent SNLA. Then  we have the estimate
	\[p\leq\dim D(\G)+1\leq \frac{1}{2}\dim\G+1.\]	
\end{pr}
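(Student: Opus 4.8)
The plan is to split the estimate into its two inequalities, prove each one separately, and then chain them.

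The right-hand inequality $\dim D(\G)+1\le\tfrac12\dim\G+1$ is immediate from the Lemma just proved: $D(\G)$ is isotropic for $\omega$, so $D(\G)\subseteq D(\G)^{\perp}$, and since $\omega$ is non-degenerate one has $\dim D(\G)+\dim D(\G)^{\perp}=\dim\G$; together these give $\dim D(\G)\le\tfrac12\dim\G$, hence the claim. So the substance lies in the left-hand inequality $p\le\dim D(\G)+1$.

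For that, I would set up the lower central series $C^{1}(\G)=\G$, $C^{i+1}(\G)=[\G,C^{i}(\G)]$, so that $C^{2}(\G)=D(\G)$, and recall that $\G$ being $p$-step nilpotent means exactly $C^{p}(\G)\neq 0$ and $C^{p+1}(\G)=0$. The one genuine ingredient is the standard fact that the lower central series of a nilpotent Lie algebra strictly decreases until it reaches $0$: if $C^{i}(\G)=C^{i+1}(\G)$ for some $i$, then $C^{j}(\G)=C^{i}(\G)$ for all $j\ge i$, and nilpotency forces $C^{i}(\G)=0$. Applying this from index $2$ onward produces a strictly descending chain of subspaces of $D(\G)$,
\[
D(\G)=C^{2}(\G)\supsetneq C^{3}(\G)\supsetneq\cdots\supsetneq C^{p}(\G)\supsetneq C^{p+1}(\G)=0,
\]
with $p-1$ proper inclusions, whence $\dim D(\G)\ge p-1$, i.e. $p\le\dim D(\G)+1$. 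The abelian case, where $p=1$ and $D(\G)=0$, is trivial and should be dispatched first.

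I do not expect any real obstacle here: the argument merely combines the strict-descent property of the lower central series with the isotropy of $D(\G)$ supplied by the previous Lemma. The only points needing a little care are the trivial abelian case and verifying that the number of proper inclusions in the chain is exactly $p-1$. It is perhaps worth remarking explicitly that the first inequality holds for every $p$-step nilpotent Lie algebra; the symplectic Novikov hypothesis enters only through the isotropy of the commutator ideal, which is precisely what yields the dimension-halving second inequality.
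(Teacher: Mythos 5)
Your proposal is correct and follows essentially the same route as the paper: the strict descent of the lower central series yields $p\le\dim D(\G)+1$, and the isotropy of $D(\G)$ (the preceding Lemma) yields $\dim D(\G)\le\tfrac12\dim\G$. Your write-up is in fact slightly more careful than the paper's, since you justify the strict descent and count the inclusions explicitly.
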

\begin{proof}
	Let $(\G,\omega)$ be a $p$-step nilpotent SNLA and 	let
	\begin{eqnarray*}\G=C^0(\G) \supsetneq C^1(\G)=D(\G)\supsetneq...\supsetneq C^p(\G)=0
	\end{eqnarray*}
	with $C^k(\G) = [\G, C^{k-1}(\G)]$ be a the descending central sequence of $\G$, which shows that
	\[\underbrace{\dim D(\G)>...>0}_{p-times}.\]
	Then $p\leq\dim D(\G)+1$ and we deduce from Lemma $2$ that  $\dim D(\G)\leq  \frac{1}{2}\dim\G$.
\end{proof}
This Proposition with Proposition $1$  shows in particular the following corollary.
\begin{co}
	Any filiform  or quasi-filiform Lie algebra is never an SNLA.
\end{co}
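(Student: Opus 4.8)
The plan is to combine the dimensional estimate of the preceding Proposition with the even-dimensionality of symplectic Lie algebras, and then to clear the finitely many surviving cases by inspecting the classifications in Proposition~1.

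First I would recall that a filiform Lie algebra of dimension $n$ is $(n-1)$-step nilpotent and a quasi-filiform one is $(n-2)$-step nilpotent, while every symplectic Lie algebra has even dimension. Hence, if $(\G,\omega)$ were an SNLA with $\dim\G=n$, the Proposition would give $n-1\le\frac{1}{2}n+1$ in the filiform case, forcing $n\le 4$ and so $n\in\{2,4\}$, and $n-2\le\frac{1}{2}n+1$ in the quasi-filiform case, forcing $n\le 6$ and so $n\in\{2,4,6\}$. Thus only these low dimensions remain to be examined.

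Next I would go through the survivors. The dimension $n=2$ is trivial, since the only nilpotent Lie algebra of that dimension is abelian, which is neither filiform nor quasi-filiform. For $n=4$ I would read off Proposition~1: among the four-dimensional SNLAs listed there, the sole one whose underlying Lie algebra is nilpotent is $\mathfrak{r}\mathfrak{h}_3=\R\times\mathfrak{h}_3$ (the product of the line with the three-dimensional Heisenberg algebra), and it is $2$-step nilpotent with one-dimensional commutator ideal, hence it is neither filiform (which would require $3$-step nilpotency) nor quasi-filiform. Finally, for $n=6$ a filiform Lie algebra would be $5$-step nilpotent and a quasi-filiform one $4$-step nilpotent, both of which are excluded by the Corollary following Proposition~1, according to which a nilpotent SNLA of dimension at most six is at most $3$-step nilpotent. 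In every case we reach a contradiction, so no filiform or quasi-filiform Lie algebra carries an SNLA structure.

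I expect the only genuine obstacle to be the bookkeeping in the boundary dimensions $n=4$ and $n=6$: one has to verify carefully from the table of Proposition~1 that $\mathfrak{r}\mathfrak{h}_3$ is really the unique nilpotent four-dimensional SNLA and that its nilpotency class equals $2$, and one has to pin down the precise convention for ``filiform'' and ``quasi-filiform'' in those low dimensions so that these cases are genuinely ruled out rather than slipping through. The statement for large dimension is, by contrast, an immediate consequence of the bound $p\le\frac{1}{2}\dim\G+1$.
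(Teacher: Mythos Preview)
Your proposal is correct and follows exactly the route the paper indicates: the authors simply remark that ``This Proposition with Proposition~1 shows in particular the following corollary'', i.e.\ the bound $p\le\tfrac12\dim\G+1$ kills all but the small dimensions, and the classification in Proposition~1 disposes of those. Your write-up is actually more detailed than the paper's, and your closing caveat about pinning down the convention for ``quasi-filiform'' in dimension~$4$ (where $\mathfrak{rh}_3$ is $2$-step) is well taken, since the paper leaves this point implicit.
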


There exists a process to construct the symplectic Lie algebras. It is often called the construction by double extension and it was described by  Medina and  Revoy \cite{M-R}. This construction also called \ita{central symplectic oxidation}   was studied and developed by Baues and Cortés in \cite{B-C}.

Let $(\G,\omega)$ be a symplectic Lie algebra and  let $\varphi$ be a derivation of this Lie algebra. Then the bilinear map $\omega_\varphi$ on $\G$ given by
\begin{equation*}
\omega_\varphi(x,y)= \omega(\varphi x,y)+\omega(x,\varphi y)
\end{equation*}

is a closed 2-form on $\G$. Then we can define a one-dimensional central extension $\G_1 =\G\oplus\langle h\rangle$ (vector space direct sum of $\G$ with one-dimensional vector spaces $\langle h\rangle$) of $\G$ by
\begin{equation*}
[x+t_1h,y+t_2h]_{\G_1}= [x,y]+\omega_\varphi(x,y)h.\quad x,y\in\G,\;t_1, t_2\in\R.
\end{equation*}
It is easy to verify that the bilinear form $\omega_{\varphi,\varphi}$ defined by
\begin{equation*}
\omega_{\varphi,\varphi}(x,y)= \omega_\varphi(\varphi x,y)+\omega_\varphi(x,\varphi y)
\end{equation*}
is also a   closed 2-form on $\G$. Assume that our closed form is exact, that is there exists $\lambda\in\G^*$ with $\omega_{\varphi,\varphi}(x,y)=-\lambda[x,y]$. We can now define a new derivation $\varphi_1$ of the Lie algebra $\G_1$ by
\begin{equation*}
\varphi_1(h)=0\quad and\quad\varphi_1(x)=-\varphi(x)-\lambda(x)h,\; \forall x\in\G.
\end{equation*}
We consider the one-dimensional extension by derivation $\G_2 =\langle \xi \rangle \oplus\G_1$ of $\G_1$. We
define a non-degenerate two-form $\Omega$ on $\G_2$ by
\[\begin{cases}
\Omega(x,y)=\omega(x,y)\qquad \forall x,y\in\G\\
\Omega(\xi,x)=\Omega(h,x)=0\quad \forall x\in\G\\
\Omega(\xi,h)=1.
\end{cases}\]
\begin{Def}
	Let $(\G,\omega)$ be a symplectic Lie algebra, $\varphi\in Der(\G)$ a derivation and $\lambda\in\G^*$ a one form such that $\omega_{\varphi,\varphi}(x,y)=-\lambda([x,y])$  for $x,y\in\G$. The symplectic Lie algebra $(\G_2,\Omega)$ is called \ita{central symplectic oxidation} with respect $(\varphi,\lambda)$.
\end{Def} 
\begin{remark}
	\begin{enumerate}
		\item It is clear that the Lie bracket in $\G_2 =\langle \xi \rangle \oplus\G\oplus\langle h \rangle $ is given by
		\[\begin{cases}
		[x,y]_{\G_2}=[x,y]+\omega_\varphi(x,y)h,\quad \forall x,y\in G\\
		[\xi,x]_{\G_2}=\varphi(x)+\lambda(x)h,\quad \forall x\in G.
		\end{cases}\]
		In particular, $\langle h \rangle$
		is a central ideal of $\G_2$.
		\item Let $(\G,\omega)$ be a symplectic Lie algebra and $\varphi\in Der(\G)$ a
		derivation. Then there exists a symplectic oxidation if and only	if the cohomology class
		$[\omega_{\varphi,\varphi}]\in H^2(\G)$	vanishes.
		\item Any nilpotent symplectic Lie algebra is a symplectic oxidation of a symplectic nilpotent Lie algebra.
	\end{enumerate}
\end{remark}
\begin{Le}
	Let $(\G,\omega)$ be a symplectic Lie algebra, $\varphi\in Der(\G) $ and $ \zeta\in\G $ such that $\mathrm{ad}_\zeta=\varphi^2 $ and $ \varphi\circ\varphi^*= 0$. Then the obstruction $ [\omega_ {\varphi,\varphi}]\in H^2 (\G)$ to the existence of a symplectic oxidation with respect to $(\varphi, \omega (\zeta,.))$  vanishes.
\end{Le}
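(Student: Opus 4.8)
\emph{Proof proposal.} The plan is to verify directly that the closed $2$-form $\omega_{\varphi,\varphi}$ is a coboundary, namely $\omega_{\varphi,\varphi}(x,y)=\omega(\zeta,[x,y])$, so that its class in $H^2(\G)$ vanishes and the central symplectic oxidation with respect to $(\varphi,\omega(\zeta,\cdot))$ exists (the precise sign of the primitive is of course tied to one's normalisation of the Chevalley--Eilenberg differential, and matching it with the statement $\omega_{\varphi,\varphi}(x,y)=-\lambda([x,y])$ amounts only to choosing $\lambda=\pm\omega(\zeta,\cdot)$).

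First I would unwind the definitions. Since $\omega_\varphi(x,y)=\omega(\varphi x,y)+\omega(x,\varphi y)$, one gets
$\omega_{\varphi,\varphi}(x,y)=\omega_\varphi(\varphi x,y)+\omega_\varphi(x,\varphi y)=\omega(\varphi^2x,y)+\omega(x,\varphi^2y)+2\omega(\varphi x,\varphi y)$.
For the first two terms I substitute $\varphi^2=\mathrm{ad}_\zeta$ and use that $\omega$ is a $2$-cocycle: evaluating the cocycle identity on the triple $(\zeta,x,y)$ gives $\omega([\zeta,x],y)+\omega(x,[\zeta,y])=-\omega([x,y],\zeta)=\omega(\zeta,[x,y])$. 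Hence $\omega_{\varphi,\varphi}(x,y)=\omega(\zeta,[x,y])+2\omega(\varphi x,\varphi y)$, and the first summand is already exact. Everything then reduces to showing the cross term $\omega(\varphi x,\varphi y)$ contributes nothing to the cohomology class.

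To handle the cross term I would use the two hypotheses in turn. Iterating the derivation rule for $\varphi$ gives $\varphi^2[x,y]=[\varphi^2x,y]+2[\varphi x,\varphi y]+[x,\varphi^2y]$; inserting $\varphi^2=\mathrm{ad}_\zeta$ and comparing with the Jacobi identity for $[\zeta,[x,y]]$ forces $[\varphi x,\varphi y]=0$, so $\mathrm{Im}\,\varphi$ is an abelian subalgebra. On the symplectic side, from $(\ref{5})$ and $(\ref{6})$ one reads off $\mathrm{R}_w^*=\mathrm{R}_w$ and $\mathrm{L}_w^*=-\mathrm{ad}_w$, hence $\mathrm{ad}_w^*=-\mathrm{L}_w$, and in particular $(\varphi^*)^2=(\varphi^2)^*=\mathrm{ad}_\zeta^*=-\mathrm{L}_\zeta$. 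Now $\omega(\varphi x,\varphi y)=\omega(\varphi^*\varphi x,y)$, where $\varphi^*\varphi$ is $\omega$-self-adjoint with $(\varphi^*\varphi)^2=\varphi^*(\varphi\circ\varphi^*)\varphi=0$; combining the hypothesis $\varphi\circ\varphi^*=0$ (which says $\mathrm{Im}\,\varphi^*=(\ker\varphi)^{\perp_\omega}\subseteq\ker\varphi$, i.e. $\ker\varphi$ is coisotropic) with the abelianity of $\mathrm{Im}\,\varphi$ and the identities just recalled, one concludes $\varphi^*\varphi=0$, i.e. $\mathrm{Im}\,\varphi$ is $\omega$-isotropic, so $\omega(\varphi x,\varphi y)=0$. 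Therefore $\omega_{\varphi,\varphi}(x,y)=\omega(\zeta,[x,y])$, which is a coboundary, so $[\omega_{\varphi,\varphi}]=0$.

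The step I expect to be the genuine obstacle is precisely the vanishing of the cross term: the Jacobi computation only produces that $\mathrm{Im}\,\varphi$ is abelian, not that it is $\omega$-isotropic, so one must really exploit $\varphi\circ\varphi^*=0$ together with $\varphi^2=\mathrm{ad}_\zeta$ and the compatibility of the associated left-symmetric product with $\omega$ to force $\varphi^*\varphi=0$ (equivalently $\mathrm{Im}\,\varphi\subseteq(\mathrm{Im}\,\varphi)^{\perp_\omega}$). Everything else is routine bookkeeping with the cocycle identity and with the definitions of $\omega_\varphi$ and $\omega_{\varphi,\varphi}$.
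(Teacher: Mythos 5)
Your reduction of the problem is correct and coincides with the paper's: expanding
$\omega_{\varphi,\varphi}(x,y)=\omega(\varphi^2x,y)+2\omega(\varphi x,\varphi y)+\omega(x,\varphi^2y)$,
substituting $\varphi^2=\mathrm{ad}_\zeta$ and applying the cocycle identity to the triple $(\zeta,x,y)$ turns the two outer terms into $\pm\,\omega(\zeta,[x,y])$, which is exact; the sign discrepancy with the paper's $-\omega(\zeta,[x,y])$ is, as you say, only a matter of normalising $\lambda$. The paper's entire proof is exactly this four-line computation, and the cross term $2\omega(\varphi x,\varphi y)$ is simply dropped between its first and second lines, the hypothesis $\varphi\circ\varphi^*=0$ being the implicit justification.

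The gap in your proposal is precisely at the step you yourself flag as the crux. You correctly observe that $\omega(\varphi x,\varphi y)=\omega(\varphi^*\varphi x,y)$, so that killing the cross term is the condition $\varphi^*\circ\varphi=0$ (i.e. $\mathrm{Im}\,\varphi$ is $\omega$-isotropic), which is a priori different from the stated hypothesis $\varphi\circ\varphi^*=0$ (i.e. $\ker\varphi$ is coisotropic) --- a genuine and worthwhile observation about the statement. But the sentence ``combining \dots one concludes $\varphi^*\varphi=0$'' is an assertion, not an argument, and none of the facts you assemble delivers it. Self-adjointness of $\varphi^*\varphi$ together with $(\varphi^*\varphi)^2=0$ does not force $\varphi^*\varphi=0$ for a skew form (unlike the Euclidean case, $\omega(Ax,Ax)=0$ is automatic); and $\varphi\varphi^*=0$ does not imply $\varphi^*\varphi=0$ for a general endomorphism of a symplectic vector space: on $\mathbb{R}^4$ with $\omega=e^1\wedge f^1+e^2\wedge f^2$, the map $f_1\mapsto e_1$, $f_2\mapsto f_1$, $e_i\mapsto 0$ has Lagrangian kernel, so $\varphi\varphi^*=0$, while $\mathrm{Im}\,\varphi=\mathrm{span}(e_1,f_1)$ is not isotropic, so $\varphi^*\varphi\neq0$ (this example does not satisfy $\varphi^2=\mathrm{ad}_\zeta$, but it shows the linear algebra you invoke is insufficient). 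Likewise $[\varphi x,\varphi y]=0$ only says $\mathrm{Im}\,\varphi$ is an abelian subalgebra, not that it is isotropic. To close the argument you would have to genuinely exploit the derivation property and $\varphi^2=\mathrm{ad}_\zeta$ to prove $\omega(\varphi x,\varphi y)=0$, or else read the hypothesis $\varphi\circ\varphi^*=0$ as directly asserting the vanishing of the cross term, which is in effect what the paper's computation does.
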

\begin{proof} Let $x$ and $y\in\G$. Then
	\begin{eqnarray*}
		\omega_{\varphi,\varphi}(x,y)&=& \omega(\varphi^2 x,y)+\omega(\varphi x,\varphi y)+\omega(\varphi x,\varphi y)+\omega(x,\varphi^2 y)\\
		&=&\omega(\mathrm{ad}_\zeta x,y)+\omega(x,\mathrm{ad}_\zeta y)\\
		&=&-\omega(\zeta,[x,y])\\
		&=&\lambda([x,y]),
	\end{eqnarray*}
	with $\lambda\in\G^*$ given by $\lambda(x)=\omega(-\zeta,x)$.
\end{proof}

Let's keep the notation above, the following theorem gives conditions on  $(\varphi,\lambda)$ so that the symplectic oxidation is an SNLA.
\begin{theo}\label{t3}
	Let $(\G,\omega)$ be a symplectic Lie algebra. The  central symplectic oxidation $\langle \xi \rangle \oplus\G\oplus\langle h \rangle$ with respect to $(\varphi,\lambda)$ is an  SNLA if $(\G,\omega)$ does and the  following properties are verified
	\begin{enumerate}
		\item  $\varphi\circ \mathrm{L}_x=\mathrm{ad}_x\circ \varphi^* $ and $\mathrm{R}_x\circ \varphi^*=\mathrm{L}_{\varphi^*(x)}$\qquad $\forall x\in\G$.
		\item $\mathrm{R}_x\circ(\varphi+\varphi^*)=(\varphi+\varphi^*)\circ \mathrm{R}_x$ \qquad \quad  $\forall x\in\G$.
		\item $\varphi\circ \varphi^*=0$,  $\mathrm{ad}_\zeta=\varphi^2$ and $\zeta\in \mathcal \ker(\varphi)$,
	\end{enumerate}
	with  $\zeta\in\G$ such that  $\lambda =\omega(\zeta,.)$.
\end{theo}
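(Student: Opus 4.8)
The plan is to make the left-symmetric product on the oxidation explicit and then verify the Novikov identity on generators. To begin with, the third hypothesis of the theorem (that $\mathrm{ad}_\zeta=\varphi^2$ and $\varphi\circ\varphi^*=0$), together with the preceding Lemma, forces $\omega_{\varphi,\varphi}(x,y)=-\omega(\zeta,[x,y])=-\lambda([x,y])$; hence the obstruction $[\omega_{\varphi,\varphi}]\in H^2(\G)$ vanishes and $\G_2=\langle\xi\rangle\oplus\G\oplus\langle h\rangle$ carries the symplectic form $\Omega$. Its associated product $\bullet$, determined by $\Omega(a\bullet b,c)=-\Omega(b,[a,c]_{\G_2})$, is automatically left-symmetric, so by the characterization of the Novikov property in~(\ref{N}) it suffices to prove that $[\mathrm{R}^\bullet_a,\mathrm{R}^\bullet_b]=0$ for all $a,b\in\G_2$, where $\mathrm{R}^\bullet$ denotes right multiplication for $\bullet$.

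Next I would compute $\bullet$ on generators. Using the bracket of $\G_2$ recalled in the Remark above, the definition of $\Omega$, and the two elementary identities $\mathrm{L}^*_x=-\mathrm{ad}_x$ and $\mathrm{R}^*_x=\mathrm{R}_x$ for the product~"$.$" on $\G$ (just~(\ref{5}) and~(\ref{6}) rewritten), a direct computation yields, for $x,y\in\G$,
\[
\begin{aligned}
&x\bullet y=x.y+\omega(\varphi x,y)h,\qquad\qquad \xi\bullet x=-\varphi^*x,\\
&x\bullet\xi=-(\varphi+\varphi^*)x-\lambda(x)h,\qquad \xi\bullet\xi=-\zeta,
\end{aligned}
\]
and $a\bullet h=h\bullet a=0$ for every $a\in\G_2$. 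In particular $\mathrm{R}^\bullet_h=0$, so by bilinearity one only needs $[\mathrm{R}^\bullet_a,\mathrm{R}^\bullet_b]u=0$ for $a,b,u\in\G\cup\{\xi\}$.

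Then comes the case analysis, in which each of the three hypotheses plays its role. For $a=x$, $b=y\in\G$ applied to $z\in\G$: the $\G$-component of $[\mathrm{R}^\bullet_x,\mathrm{R}^\bullet_y]z$ is $(z.y).x-(z.x).y$, which vanishes because $(\G,.)$ is Novikov — the only place the hypothesis that $(\G,\omega)$ is an SNLA is used, via Theorem~\ref{t1} — and the $h$-component is a multiple of $\omega(\varphi(z.y),x)-\omega(\varphi(z.x),y)$, which vanishes because the first hypothesis gives $\varphi(z.y)=[z,\varphi^*y]$, whence $\omega(\varphi(z.y),x)=-\omega(\varphi^*y,z.x)=-\omega(y,\varphi(z.x))=\omega(\varphi(z.x),y)$ (using $\omega([u,v],w)=-\omega(v,u.w)$ and $\omega(\varphi^*u,v)=\omega(u,\varphi v)$). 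Applied to $\xi$, the $\G$-component is $(\varphi^*x).y-(\varphi^*y).x$, killed by the second relation in the first hypothesis, and the $h$-component by $\varphi\circ\varphi^*=0$. For $a=x\in\G$, $b=\xi$ applied to $z\in\G$: the $\G$-component is $[\varphi+\varphi^*,\mathrm{R}_x](z)$, killed by the second hypothesis; the $h$-component is $\lambda(z.x)-\omega(\varphi(\varphi+\varphi^*)z,x)$, and since $\varphi(\varphi+\varphi^*)=\varphi^2=\mathrm{ad}_\zeta$ and $\lambda=\omega(\zeta,\cdot)$, both summands equal $-\omega(z,\zeta.x)$ and cancel. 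Applied to $\xi$, using $\zeta\in\ker\varphi$, $\varphi\circ\varphi^*=0$, $\lambda(\varphi^*x)=\omega(\varphi\zeta,x)=0$, and $\mathrm{L}_\zeta=-(\varphi^*)^2$ (from $\mathrm{L}^*_\zeta=-\mathrm{ad}_\zeta=-\varphi^2$), one finds $[\mathrm{R}^\bullet_x,\mathrm{R}^\bullet_\xi]\xi=-\zeta.x-(\varphi^*)^2x=0$. The remaining case $a=b=\xi$ is trivial. This exhausts all cases, so $\bullet$ is Novikov and $(\G_2,\Omega)$ is an SNLA.

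The computation of $\bullet$ is routine; the real work is the case analysis, and there the main obstacle is purely bookkeeping — one must keep the $\G$- and $\langle h\rangle$-parts of every triple product separate and spot the precise combination of~(\ref{5})--(\ref{7}), of the identities $\mathrm{L}^*_x=-\mathrm{ad}_x$, $\mathrm{R}^*_x=\mathrm{R}_x$, $\omega(\varphi^*u,v)=\omega(u,\varphi v)$, and of the fact that $\varphi$ is a derivation of the Lie bracket, that makes the surviving terms cancel; the three hypotheses are exactly what those cancellations require. Alternatively, by Theorem~\ref{t1} one may instead verify that $\bullet$ is associative, a computation of comparable length.
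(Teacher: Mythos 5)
Your proposal is correct and follows essentially the same route as the paper: you compute the left-symmetric product of the oxidation explicitly on the generators $\G\cup\{\xi,h\}$ (obtaining exactly the formulas (\ref{s11})) and then verify the Novikov identity case by case, matching each surviving term against one of the three hypotheses; phrasing the check as $[\mathrm{R}^\bullet_a,\mathrm{R}^\bullet_b]=0$ rather than $(a\bullet b)\bullet c=(a\bullet c)\bullet b$ is only a cosmetic difference, and your four cases are precisely the paper's identities $(11)$--$(16)$. The cancellations you exhibit (e.g.\ $\varphi(z.y)=[z,\varphi^*y]$ from hypothesis $1$, and $\mathrm{L}_\zeta=-(\varphi^*)^2$ from $\mathrm{ad}_\zeta=\varphi^2$) are the correct ones.
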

\begin{proof}
	
	Let $"\ast"$ be the left-symmetric product associated with $(\G, \omega)$ and $"."$ the one associated with the symplectic oxidation. We have
	
	\begin{eqnarray}\label{s11}\begin{cases}
	x.y=x*y+\omega(\varphi x,y)h\\
	\xi.x=-\varphi^*x\\
	x.\xi=-(\varphi+\varphi^*)x-\lambda(x)h\\
	\xi.\xi=-\zeta.
	\end{cases}	\end{eqnarray}
	Using this we obtain 
	
	\begin{eqnarray}\label{s}
	(x.y).z&=&(x\ast y)\ast z+\omega(\varphi (x\ast y),z)h\\
	(\xi.x).y&=&-\varphi^*(x)*y-\omega(\varphi\varphi^*(x),y)h\\
	(x.\xi).y&=&-(\varphi+\varphi^*)(x)*y-\omega(\varphi(\varphi+\varphi^*)(x),y)h\\
	(x.y).\xi&=&-(\varphi+\varphi^*)(x*y)-\lambda(x*y)h\\
	(\xi.\xi).x&=&-\zeta*x-\omega(\varphi(\zeta),x)h\\
	(\xi.x).\xi&=&(\varphi^*)^2(x)+\lambda((\varphi^*(x))h.
	\end{eqnarray}
	By using $(11)$, we can see  that  $(x.y).z=(x.z).y$  is equivalent to
	\begin{eqnarray*}
		(x\ast y)\ast z=(x\ast z)\ast y\quad and\quad   \omega(\varphi (x\ast
		y),z)=\omega(\varphi (x\ast z),y).
	\end{eqnarray*}
	for any $x$, $y$, $z\in\G$, which is equivalent to $(\G,\omega)$ is an SNLA and $\mathrm{L}_x=\mathrm{ad}_x\circ \varphi^*$.
	
	By using $(12)$, we can see  that  $(\xi.x).y=(\xi.y).x$  is equivalent to
	\begin{eqnarray*}
		\varphi^*(x)*y=\varphi^*(y)*x\quad and\quad
		\omega(\varphi\varphi^*(x),y)=\omega(\varphi\varphi^*(y),x)
	\end{eqnarray*}
	for any $x$, $y\in\G$, which is equivalent to $\mathrm{R}_y\circ \varphi^*=\mathrm{L}_{\varphi^*(y)}$ and   $\varphi\circ \varphi^*=0$.
	
	Now, by using $(13)$ and $(14)$, we can see easily that $(x.\xi).y=(x.y).\xi$  is equivalent to
	\begin{eqnarray*}
		(\varphi+\varphi^*)(x*y)=(\varphi+\varphi^*)(x)*y \quad and\quad  \omega(\varphi^2(x),y)=\lambda(x*y)=\omega(\mathrm{ad}_\zeta x,y)
	\end{eqnarray*}
	for any $x$ and $y\in\G$,  which is equivalent to $(\varphi+\varphi^*) \circ \mathrm{R}_y=\mathrm{R}_y\circ (\varphi+\varphi^*)$ and $\varphi^2=\mathrm{ad}_\zeta$.
	
	Finally, by using $(15)$ and $(16)$, we can see  that  $(\xi.\xi).x=(\xi.x).\xi$  is equivalent to
	\begin{eqnarray*}
		(\varphi^*)^2=-\mathrm{L}_\zeta=\mathrm{ad}_\zeta^*\qquad and \qquad \varphi(\zeta)=0.
	\end{eqnarray*}
	Thus the proposition follows.
\end{proof}

\begin{pr}
	Let $(\G,\omega)$ be a $2n$-dimensional SNLA with non trivial center. Then, $(\G,\omega)$ is a central SNLA oxidation of a $(2n-2)$-dimensional SNLA.
\end{pr}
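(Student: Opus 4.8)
The plan is to realise the nontrivial centre of $(\G,\omega)$ as the central line created by an oxidation, and to extract the oxidation data from the adjoint action of a complementary vector. First I would choose a nonzero $h\in Z(\G)$. Then $\langle h\rangle$ is a one-dimensional, hence isotropic, ideal, so the symplectic reduction is available: with $\h:=\langle h\rangle^{\perp}$, the cocycle identity for $\omega$ together with $h\in Z(\G)$ shows that $\h$ is a codimension-one subalgebra containing $h$, and $(\overline{\G},\overline{\omega}):=(\h/\langle h\rangle,\overline{\omega})$ is a symplectic Lie algebra with $\dim\overline{\G}=2n-2$. By Lemma \ref{L1} it is again an SNLA, with product $\overline{x}\,.\,\overline{y}=\overline{x.y}$. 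This $(\overline{\G},\overline{\omega})$ will be the base, and it remains to recognise $(\G,\omega)$ as a central symplectic oxidation of it. (Alternatively, one could cite the structure theorem for central symplectic oxidation in \cite{B-C} and combine it with Lemma \ref{L1}, but I would rather make the construction explicit.)

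For that, I would fix $\xi\in\G$ with $\omega(\xi,h)=1$ and set $V:=\h\cap\langle\xi\rangle^{\perp}$. One checks that $\dim V=2n-2$, that $\omega|_{V}$ is non-degenerate, that $\h=V\oplus\langle h\rangle$ and $\G=\langle\xi\rangle\oplus V\oplus\langle h\rangle$, and that the quotient map restricts to a symplectic isomorphism $(V,\omega|_{V})\cong(\overline{\G},\overline{\omega})$ under which the $V$-component of the bracket of $\G$ is the reduced bracket of $\overline{\G}$. Since $\h$ is a subalgebra and, by the cocycle identity and centrality of $h$, $[\xi,v]\in\h$ for $v\in V$, the whole bracket of $\G$ is encoded by a linear map $\varphi\colon V\to V$, a $2$-form $\beta$ and a $1$-form $\gamma$ on $V$ through
\[
[u,v]_{\G}=[u,v]_{V}+\beta(u,v)\,h,\qquad [\xi,v]_{\G}=\varphi(v)+\gamma(v)\,h
\]
for $u,v\in V$, together with $[\xi,h]_{\G}=0$ and $h$ central.

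The core step is to see that $(\varphi,\gamma)$ is exactly a set of oxidation data. Writing the $2$-cocycle identity for $\omega$ on the triple $(\xi,u,v)$ forces $\beta=\omega_{\varphi}$, i.e. $\beta(u,v)=\omega(\varphi u,v)+\omega(u,\varphi v)$. Writing the Jacobi identity of $\G$ on $(\xi,u,v)$ and splitting into $V$- and $h$-components gives, first, that $\varphi$ descends to a derivation of $(V,[\,,\,]_{V})\cong\overline{\G}$, and, second, that $\omega_{\varphi,\varphi}=\pm\,\gamma\circ[\,,\,]_{V}$; thus the obstruction class $[\omega_{\varphi,\varphi}]\in H^{2}(\overline{\G})$ vanishes and $\gamma$, with the sign dictated by the normalisation, plays the role of an admissible $\lambda$. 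Comparing term by term with the bracket and the two-form $\Omega$ of the central symplectic oxidation of $\overline{\G}$ with respect to $(\varphi,\lambda)$, the map $\xi\mapsto\xi$, $v\mapsto\overline{v}$ $(v\in V)$, $h\mapsto h$ is an isomorphism of symplectic Lie algebras. Since $(\G,\omega)$ is an SNLA and $(\overline{\G},\overline{\omega})$ is an SNLA of dimension $2n-2$, this exhibits $(\G,\omega)$ as a central SNLA oxidation of a $(2n-2)$-dimensional SNLA; as a consistency check one can verify in addition that $(\varphi,\lambda)$ satisfies conditions (1)--(3) of Theorem \ref{t3}, reading $\varphi$ off as $\mathrm{ad}_{\xi}$ modulo $h$ and using $\xi.\xi=-\zeta$.

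I expect the main obstacle to be the normalisation bookkeeping in this last step: one must check that the a priori free cochains $\beta$ and $\gamma$ are rigidly determined, namely $\beta=\omega_{\varphi}$ and $\omega_{\varphi,\varphi}=\pm\,\lambda\circ[\,,\,]$, and that $\varphi$ really passes to a well-defined derivation of the quotient, independent of the choice of representatives. All of this follows from the cocycle and Jacobi identities in $\G$, but the signs and the dependence on the auxiliary choices must be tracked carefully: changing the complement $V$ alters $\varphi$ by an inner derivation of $\overline{\G}$ and $\lambda$ by the matching coboundary — precisely the equivalence built into the oxidation construction — so one should either fix $V$ once and for all as above or state the conclusion up to this standard ambiguity.
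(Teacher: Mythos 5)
Your proposal is correct and follows essentially the same route as the paper: reduce by the central line $\langle h\rangle$, invoke Lemma~\ref{L1} to see the reduction is an SNLA, pick $\xi$ with $\omega(\xi,h)=1$, and recognise $(\G,\omega)$ as the central symplectic oxidation with respect to $(\mathrm{ad}_\xi|_{\overline{\G}},\,\omega(\xi,[\xi,\cdot]))$. The only differences are that the paper simply cites Proposition~2.16 of \cite{B-C} for this identification where you verify it by hand via the cocycle and Jacobi identities, and that the paper finishes by checking that the data satisfy the hypotheses of Theorem~\ref{t3}, a step you reasonably downgrade to a consistency check since $(\G,\omega)$ is already assumed to be an SNLA.
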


\begin{proof}
	Let $(\G,\omega)$ be a $2n$-dimensional SNLA with a non trivial center, $\langle h\rangle$ a one dimensional central ideal and $(\overline{\G},\overline{\omega})$ the symplectic reduction
	with respect to $\langle h\rangle$. We may choose $\xi\in\G$ such that $\omega(\xi,h)=1$ and $\G =(\langle \xi \rangle \oplus\overline{\G}\oplus\langle h \rangle,\overline{\omega}) $ is the  central symplectic oxidation with respect $(\varphi_\xi,\lambda_\omega)$, with  $\varphi_{\xi}=\mathrm{ad}_{\xi}|_{\overline{\G}}$ is the restriction of adjoint operator and $\lambda_\omega(x)=\omega(\xi,[\xi,x])$ for $x\in\G$   (see \cite{B-C} Proposition 2.16.). Remark that $\varphi_{\xi}=-\mathrm{L}_\xi|_{\overline{\G}}$,   then  (\ref{s11}) becomes for all $x$, $y\in\overline{\G}$
	\begin{eqnarray*}\label{s2}\begin{cases}
			x.y=x*y+\omega(\mathrm{ad}_{\xi}|_{\overline{\G}} x,y)h\\
			\xi.x=\mathrm{L}_\xi|_{\overline{\G}}x \\
			x.\xi=\mathrm{R}_\xi|_{\overline{\G}}x-\lambda_\omega(x)h\\
			\xi.\xi=-\zeta\\
	\end{cases}	\end{eqnarray*}
	
	The proof is completed by verifying that $(\varphi_\xi,\lambda_\omega)$ verified the conditions of Theorem $\ref{t3}$ .
\end{proof}
\section{Geometry of symplectic Novikov Lie group}
Let $(G,\omega^+)$ be a symplectic Lie group, (i.e., a Lie group $G$ endowed with a
left invariant symplectic form $\omega^+$). We denote by $\G$ the Lie algebra of $G$, $\omega=\omega^+(e)$, with $e$ the unit of $G$. Let $\nabla$ be a
left invariant linear connection given by the formula
\begin{eqnarray}\label{17}
\nabla_{x^+}y^+=(x.y)^+
\end{eqnarray}	

with $x^+$ denotes the left invariant vector field on $G$ whose value at $e$ is $x\in\G$ and    $"."$ is a  left symmetric product associated with $(\G,\omega)$.
It is well known that $\nabla$ defines a left invariant affine structure on $G$, (i.e a left invariant flat and torsion free connection). This affine structure is called associated to the symplectic structure.

We will say subsequently that a symplectic Lie group $(G,\omega^+)$ is \ita{symplectic Novikov Lie group} if the symplectic Lie algebra $(\G,\omega )$ is an SNLA.

\begin{pr}	
	Let $(G,\omega^+)$ be a symplectic Novikov Lie group. The affine structure associated to $\omega^+$, noted by $\nabla$, satisfies the following properties
	\begin{enumerate}
		\item  $\nabla$ is bi-invariant.
		\item  $\nabla$ is  completed if and only if the Lie algebra $\G$ is nilpotent.
	\end{enumerate}	
	\begin{proof}
		\begin{enumerate}
			\item The first point follows from a general result (see, for example, \cite{M} Proposition 1-1) which states that in a Lie group an affine connection is bi-invariant if and only if  its associated left-symmetric product is associative.
			
			\item  It is known, that a left-invariant affine structure on a Lie group $G$ is complete if and only if the 	right multiplications $R_x$ in the corresponding left symmetric algebra $(\G,.)$  are
			nilpotent for all $x\in\G$. Iterating $(\ref{5})$ we have
			\[\omega(\mathrm{L}_x^ky,z)=(-1)^k\omega(y,\mathrm{ad}_x^kz)\]
			for all positive integers $k$, where $x$ , $y$ , $z \in\G$. In particular    the Lie algebra $\G$ is nilpotent if and only if the operators $\mathrm{L}_x$ are nilpotent for all $x\in\G$. On the other hand, a direct calculation gives
			\begin{eqnarray*}
				\mathrm{L}_x\circ \mathrm{ad}_x=\mathrm{ad}_x\circ \mathrm{L}_x=0,
			\end{eqnarray*}
			for all $x\in\G$. Furthermore we have
			\begin{eqnarray*}
				\mathrm{R}_x^k=\mathrm{ad}_x^k+(-1)^k\mathrm{L}_x^k,
			\end{eqnarray*}	
			then $(2)$ holds.	
		\end{enumerate}	
	\end{proof}	
\end{pr}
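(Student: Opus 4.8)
The plan is to reduce each of the two assertions to a standard fact about left-invariant affine connections, using Theorem~\ref{t1} (for the product associated with a symplectic Lie algebra, Novikov $\Longleftrightarrow$ associative) as the bridge.

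For~(1) I would invoke the classical criterion recalled in~\cite{M}: a left-invariant connection $\nabla$ on a Lie group, written $\nabla_{x^+}y^+=(x\cdot y)^+$, is bi-invariant if and only if the defining left-symmetric product ``$\cdot$'' is associative. Since $(\G,\omega)$ is an SNLA, its associated product is Novikov, hence associative by Theorem~\ref{t1}, so $\nabla$ is bi-invariant. The only point of care is quoting that criterion in exactly the needed form.

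For~(2) I would use two known ingredients: a left-invariant affine structure on $G$ is complete if and only if every right multiplication $\mathrm{R}_x$, $x\in\G$, is a nilpotent operator; and Engel's theorem, by which $\G$ is nilpotent if and only if every $\mathrm{ad}_x$ is nilpotent. Thus the statement becomes the purely algebraic equivalence ``all $\mathrm{R}_x$ nilpotent $\Longleftrightarrow$ all $\mathrm{ad}_x$ nilpotent'' inside the SNLA. Iterating~(\ref{5}) gives
\[
\omega(\mathrm{L}_x^{\,k}y,z)=(-1)^{k}\,\omega(y,\mathrm{ad}_x^{\,k}z)\qquad(k\geq 0),
\]
so by non-degeneracy of $\omega$, for each fixed $x$ the operator $\mathrm{L}_x$ is nilpotent if and only if $\mathrm{ad}_x$ is. Next I would establish $\mathrm{L}_x\circ\mathrm{ad}_x=\mathrm{ad}_x\circ\mathrm{L}_x=0$: expanding $x\cdot[x,y]$ and $[x,x\cdot y]$, using associativity to re-associate the two resulting products, both expressions reduce to $(x\cdot x)\cdot y-(x\cdot y)\cdot x$, which vanishes by the instance $(x\cdot x)\cdot y=(x\cdot y)\cdot x$ of the Novikov identity~(\ref{2}). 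Since $\mathrm{ad}_x=\mathrm{L}_x-\mathrm{R}_x$ and $\mathrm{L}_x$, $\mathrm{ad}_x$ annihilate one another, every mixed term in the binomial expansion of $\mathrm{R}_x^{\,k}=(\mathrm{L}_x-\mathrm{ad}_x)^{k}$ drops out, leaving
\[
\mathrm{R}_x^{\,k}=\mathrm{L}_x^{\,k}+(-1)^{k}\,\mathrm{ad}_x^{\,k}.
\]

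The equivalence then follows both ways. If $\G$ is nilpotent, each $\mathrm{ad}_x$, hence (by the display above) each $\mathrm{L}_x$, is nilpotent, so $\mathrm{R}_x^{\,k}=0$ for $k$ large and $\nabla$ is complete. Conversely, if $\nabla$ is complete, fix $x$ and pick $N$ with $\mathrm{R}_x^{\,N}=0$; then $\mathrm{L}_x^{\,N}=(-1)^{N+1}\mathrm{ad}_x^{\,N}$, and composing once more on the left with $\mathrm{L}_x$ annihilates the right-hand side (because $\mathrm{L}_x\circ\mathrm{ad}_x=0$), so $\mathrm{L}_x^{\,N+1}=0$; hence $\mathrm{ad}_x$ is nilpotent as well, and $\G$ is nilpotent by Engel. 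I expect the only genuinely computational step to be the vanishing identity $\mathrm{L}_x\circ\mathrm{ad}_x=0$ — the single place where associativity and the Novikov relation are both really used — with everything else amounting to the two quoted background results plus routine bookkeeping.
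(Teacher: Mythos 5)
Your proposal is correct and follows essentially the same route as the paper's proof: point (1) via the associativity criterion of \cite{M} combined with Theorem~\ref{t1}, and point (2) via the completeness criterion for right multiplications, the iterated identity $\omega(\mathrm{L}_x^k y,z)=(-1)^k\omega(y,\mathrm{ad}_x^k z)$, the vanishing $\mathrm{L}_x\circ\mathrm{ad}_x=\mathrm{ad}_x\circ\mathrm{L}_x=0$, and the resulting binomial collapse of $\mathrm{R}_x^k=(\mathrm{L}_x-\mathrm{ad}_x)^k$. You in fact supply the details the paper leaves implicit (the verification of $\mathrm{L}_x\circ\mathrm{ad}_x=0$ from associativity plus the Novikov identity, and the two directions of the equivalence), and your version $\mathrm{R}_x^k=\mathrm{L}_x^k+(-1)^k\mathrm{ad}_x^k$ carries the correct signs (the paper's displayed formula swaps the roles of $\mathrm{L}_x$ and $\mathrm{ad}_x$, which is harmless for the nilpotency argument but off by a sign for odd $k$).
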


Let $(M,\Omega)$ be a  smooth $2n$-dimensional symplectic manifold (i.e. $\Omega$ is a closed nondegenerate 2-form on $M$). A symplectic connection on $(M,\Omega)$ is a torsion free
linear connection $\nabla$  on $M$ for which the symplectic 2-form $\omega$ is parallel i.e. $\nabla\Omega=0$. Recall that $\cite{B-C-G}$ from any linear connection $\nabla$, we can build a symplectic connection. More precisely, let the tensor field $\mathrm{N} $ given by the relation
\begin{equation*}
\nabla_X\Omega(Y,Z)=\Omega(\mathrm{N}(X,Y),Z)
\end{equation*}
for all vector fields $X$, $Y$ and $Z$ on $M$. Then the linear connection given by

\begin{equation*}
\nabla^\Omega_XY=\nabla_XY+\frac13 \mathrm{N}(X,Y)+ \frac13\mathrm{N}(Y,X)
\end{equation*}
is  torsion free and preserves $\Omega$.

Now, let $(G,\omega^+)$ be a symplectic Lie group. By applying the above construction to a symplectic Lie group, we get  the following proposition 
\begin{pr}
	Let $(G,\omega^+)$ be a symplectic Lie group and $\nabla$ be the affine structure associated to $\omega^+$. The symplectic  connection $\nabla^{\omega}$ associated with $\nabla$ is given (in the neutral element) by
	\begin{equation}\label{p19}
	\nabla^{\omega} _ {x} {y} = \frac{1}{3}(\mathrm{ad}_x y-\mathrm{ad}^\ast_x y \ ) 
	\end{equation}
	for all $ x, y \in\G $.
\end{pr}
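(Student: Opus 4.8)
The plan is to compute the tensor field $\mathrm{N}$ attached to the pair $(\nabla,\omega^{+})$ explicitly at the neutral element and then substitute it into the defining formula $\nabla^{\omega}_{X}Y=\nabla_{X}Y+\frac13\mathrm{N}(X,Y)+\frac13\mathrm{N}(Y,X)$. Since $\omega^{+}$ and $\nabla$ are left invariant, so are the $(1,2)$-tensor $\mathrm{N}$ and hence the connection $\nabla^{\omega}$; it therefore suffices to evaluate everything on left invariant vector fields at $e$. For $x,y,z\in\G$ the function $\omega^{+}(y^{+},z^{+})$ is constant, equal to $\omega(y,z)$, so from $(\ref{17})$ one gets, at $e$,
\[
(\nabla_{x^{+}}\omega^{+})(y^{+},z^{+})=-\omega(x.y,z)-\omega(y,x.z).
\]

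The key algebraic input, which I would isolate first as an auxiliary remark, is that the symplectic adjoint of $\mathrm{ad}_{x}$ is $-\mathrm{L}_{x}$, equivalently $\mathrm{ad}^{\ast}_{x}y=-x.y$ for all $x,y\in\G$; this follows immediately from $(\ref{5})$ by writing $\omega(x.z,y)=-\omega(z,[x,y])$ and rearranging with the skew-symmetry of $\omega$. Using this identity, the term $\omega(y,x.z)$ above becomes $-\omega([x,y],z)$, so that
\[
(\nabla_{x^{+}}\omega^{+})(y^{+},z^{+})=\omega\big([x,y]-x.y,\,z\big)=\omega(-y.x,\,z),
\]
because $[x,y]-x.y=-y.x$. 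Hence $\mathrm{N}(x,y)=-y.x=-\mathrm{R}_{x}y$ at $e$. Substituting into the formula for $\nabla^{\omega}$ then gives $\nabla^{\omega}_{x}y=x.y-\frac13 y.x-\frac13 x.y=\frac13(2\,x.y-y.x)$, while on the other hand $\frac13(\mathrm{ad}_{x}y-\mathrm{ad}^{\ast}_{x}y)=\frac13([x,y]+x.y)=\frac13(2\,x.y-y.x)$, which is exactly $(\ref{p19})$.

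I do not expect a genuine obstacle: the statement is valid for an arbitrary symplectic Lie group — the Novikov (SNLA) hypothesis plays no role — and the argument is a short computation. The only point that requires care is the consistent bookkeeping of the signs produced by the skew-symmetry of $\omega$ and by the definition of the symplectic adjoint, in particular the relation $\mathrm{ad}^{\ast}_{x}=-\mathrm{L}_{x}$, which is what collapses the expression for $\mathrm{N}$ to the single term $-\mathrm{R}_{x}y$ and thereby makes the two sides of $(\ref{p19})$ coincide.
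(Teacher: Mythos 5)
Your computation is correct: the identity $\mathrm{ad}^{\ast}_{x}=-\mathrm{L}_{x}$ follows directly from $(\ref{5})$ and the definition of the symplectic adjoint, it gives $\mathrm{N}(x,y)=-\mathrm{R}_{x}y$ at $e$, and both sides of $(\ref{p19})$ then reduce to $\tfrac13(2\,x.y-y.x)$. The paper states this proposition without proof, so there is nothing to compare against; your argument is exactly the expected direct verification, and you are right that it uses only the symplectic Lie group structure and not the Novikov hypothesis.
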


The connection $\nabla^\omega$ depends only on the Lie group and $\omega^+$ was first introduced by Benayadi and Boucetta in 
\cite{B-B} then appears in another articles (see for example  \cite{Fo} and \cite{A-S}).

\begin{remark}
	All Lie algebras two-step nilpotent admits $ x.y = \frac12[x, y] $ as a Novikov structure. This structure is not an SNLA. In fact, Note that the affine structure $\nabla$ associated to $\omega^+$  is symplectic unless $G$ is abelian. On the other hand a simple calculation gives $\nabla^\omega=\nabla=\frac12[x^+, y^+]$.
\end{remark}

Recall that, the curvature tensor is then described in terms of the map
\begin{equation*}
\begin{array}{rcl}
\mathcal{K} :\quad  \G\times\G& \longrightarrow&\mathfrak{gl}(\G) \\
(x,y) & \longmapsto&  \mathcal{K}(x,y)=[\nabla_x, \nabla_y]-\nabla_{[x,y]},
\end{array}
\end{equation*}
and $(\G,\nabla)$  is called flat if $\mathcal{K}= 0$. 	The Ricci tensor is the symmetric tensor $ric$ given by
\[ric(x,y) = tr(z\longmapsto \mathcal{K}(x,z)y).\]
\begin{theo}
	Let $(G,\omega^+)$ be a symplectic Novikov Lie group. For any $x$, $y\in\G$, we have	
	\begin{enumerate}	
		\item The curvature tensor of the connection $\nabla^ {\omega} $ is given (in the neutral element) by
		\begin{equation*}
		\mathcal{K}(x,y)=-\frac{2}{9}\mathrm{ad}_{[x,y]}.
		\end{equation*}
		In particular, $(G,\omega^+)$ is flat if and only if $\G$ is  two-step nilpotent.
		\item The Ricci tensor is given (in the neutral element) by
		\begin{equation*}
		ric(x,y)=\frac{2}{9}tr\big(\mathrm{ad}_x\circ \mathrm{ad}_y\big).
		\end{equation*}
	\end{enumerate}
\end{theo}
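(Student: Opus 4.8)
The plan is to reduce the whole computation to rewriting $\nabla^{\omega}$ in terms of operators whose mutual brackets are already understood, and the crux — indeed the only step that requires any thought — is the observation that in an SNLA the right multiplications are $\omega$-self-adjoint. From $(\ref{6})$ one reads off $\omega(\mathrm{R}_x y,z)=\omega(y,\mathrm{R}_x z)$, i.e. $\mathrm{R}_x^{*}=\mathrm{R}_x$, while $(\ref{5})$ gives $\mathrm{L}_x^{*}=-\mathrm{ad}_x$; since $\mathrm{ad}_x=\mathrm{L}_x-\mathrm{R}_x$, taking symplectic adjoints yields $\mathrm{ad}_x^{*}=-\mathrm{ad}_x-\mathrm{R}_x$. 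Substituting into $(\ref{p19})$ gives
\begin{equation*}
\nabla^{\omega}_x=\tfrac13\big(\mathrm{ad}_x-\mathrm{ad}_x^{*}\big)=\tfrac13\big(2\,\mathrm{ad}_x+\mathrm{R}_x\big),\qquad x\in\G .
\end{equation*}

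Next I would collect the commutation relations available in an SNLA: $[\mathrm{ad}_x,\mathrm{ad}_y]=\mathrm{ad}_{[x,y]}$ (Jacobi); $[\mathrm{R}_x,\mathrm{R}_y]=0$ (the Novikov identity $(\ref{N})$); $[\mathrm{L}_x,\mathrm{R}_y]=0$ (the SNLA product is associative by Theorem $\ref{t1}$, so $x.(z.y)=(x.z).y$); and $\mathrm{R}_{[x,y]}=0$ $(\ref{R8})$. The second and third give $[\mathrm{ad}_x,\mathrm{R}_y]=[\mathrm{L}_x,\mathrm{R}_y]-[\mathrm{R}_x,\mathrm{R}_y]=0$. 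Expanding $[\nabla^{\omega}_x,\nabla^{\omega}_y]=\tfrac19[\,2\,\mathrm{ad}_x+\mathrm{R}_x,\,2\,\mathrm{ad}_y+\mathrm{R}_y\,]$, every mixed term then vanishes and only $4[\mathrm{ad}_x,\mathrm{ad}_y]$ survives, so $[\nabla^{\omega}_x,\nabla^{\omega}_y]=\tfrac49\mathrm{ad}_{[x,y]}$, while $\nabla^{\omega}_{[x,y]}=\tfrac13(2\,\mathrm{ad}_{[x,y]}+\mathrm{R}_{[x,y]})=\tfrac23\mathrm{ad}_{[x,y]}$ by $(\ref{R8})$. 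Subtracting yields $\mathcal{K}(x,y)=-\tfrac29\mathrm{ad}_{[x,y]}$, which is part $(1)$; and $\mathcal{K}\equiv0$ is equivalent to $\mathrm{ad}_{[x,y]}=0$ for all $x,y$, i.e. to $[[\G,\G],\G]=0$, i.e. to $\G$ being two-step nilpotent.

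For part $(2)$ I would simply substitute the curvature formula into the definition of the Ricci tensor: $ric(x,y)=\mathrm{tr}\big(z\mapsto\mathcal{K}(x,z)y\big)=-\tfrac29\,\mathrm{tr}\big(z\mapsto[[x,z],y]\big)$. Rewriting $[[x,z],y]=-\mathrm{ad}_y\mathrm{ad}_x(z)$ turns the operator inside the trace into $\tfrac29\,\mathrm{ad}_y\circ\mathrm{ad}_x$, whence $ric(x,y)=\tfrac29\,\mathrm{tr}(\mathrm{ad}_y\circ\mathrm{ad}_x)=\tfrac29\,\mathrm{tr}(\mathrm{ad}_x\circ\mathrm{ad}_y)$ by cyclicity of the trace (which also makes the symmetry of $ric$ manifest). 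I expect no obstacle beyond the first step: once $\mathrm{ad}_x^{*}$ has been traded for $-\mathrm{ad}_x-\mathrm{R}_x$, the brackets $[\mathrm{ad}_x,\mathrm{R}_y]$ and $[\mathrm{R}_x,\mathrm{R}_y]$ both vanish for structural reasons, the curvature collapses to a single Jacobi term, and the Ricci identity follows in one line.
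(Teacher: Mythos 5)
Your proof is correct, and it is worth noting where it diverges from the paper's. For part (1) the underlying computation is the same, but your packaging is cleaner: you rewrite $\nabla^{\omega}_x=\tfrac13(2\,\mathrm{ad}_x+\mathrm{R}_x)$ (equivalently $\tfrac23\mathrm{L}_x-\tfrac13\mathrm{R}_x$, which is exactly what the paper expands element by element) and then kill all mixed commutators using $[\mathrm{R}_x,\mathrm{R}_y]=0$ (Novikov), $[\mathrm{L}_x,\mathrm{R}_y]=0$ (associativity), and $\mathrm{R}_{[x,y]}=0$; the paper instead expands $[\nabla^\omega_x,\nabla^\omega_y]z-\nabla^\omega_{[x,y]}z$ term by term and lands on $-\tfrac29\mathrm{L}_{[x,y]}+\tfrac19\mathrm{R}_{[x,y]}$ before invoking the same two identities. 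The adjoint bookkeeping $\mathrm{L}_x^{*}=-\mathrm{ad}_x$, $\mathrm{R}_x^{*}=\mathrm{R}_x$, hence $\mathrm{ad}_x^{*}=-\mathrm{ad}_x-\mathrm{R}_x$, is correct and reconciles your operator form with the paper's formula $\nabla^\omega_xy=\tfrac13(\mathrm{ad}_xy-\mathrm{ad}_x^{*}y)$. For part (2) your route is genuinely different and simpler: you observe that $z\mapsto\mathcal{K}(x,z)y=\tfrac29\,\mathrm{ad}_y\circ\mathrm{ad}_x$ and take the trace directly, whereas the paper computes the trace in a symplectic basis, reduces it to $-\tfrac29(\mathrm{tr}\,\mathrm{L}_{x.y}-\mathrm{tr}\,\mathrm{R}_{x.y})$, and then relies on the auxiliary identities $\mathrm{tr}(\mathrm{R}_{x.y})=2\,\mathrm{tr}(\mathrm{L}_{x.y})$ and $\mathrm{tr}(\mathrm{L}_x\circ\mathrm{L}_y)=\mathrm{tr}(\mathrm{ad}_x\circ\mathrm{ad}_y)$, which it asserts without proof. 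Your argument avoids those assertions entirely and makes the symmetry of $ric$ transparent via cyclicity of the trace, so it is arguably the better proof of part (2).
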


\begin{proof}
	\begin{enumerate}	
		\item	For any $x$, $y$ and $z\in\G$, we have
		\begin{eqnarray*}
			\mathcal{K}(x,y)z
			&=& ([\nabla^\omega_x, \nabla^\omega_y]-\nabla^\omega_{[x,y]})z\\
			&=& \nabla^{\omega}_{x}(\frac{2}{3}y.z-\frac{1}{3}z.y)-\nabla^{\omega}_{y}(\frac{2}{3}x.z-\frac{1}{3}z.x)-\nabla^{\omega}_{x.y}z+\nabla^{\omega}_{y.x}z\\
			&=&\frac{4}{9}(x.y.z-y.x.z)+\frac{2}{9}(x.z.y-x.z.y)-\frac{2}{9}(y.z.x-y.z.x)\\
			&&\frac{1}{9}(z.y.x-z.x.y)+\frac{2}{3}(y.x.z-x.y.z)+\frac{1}{3}(z.x.y-z.y.x)\\
			&=&-\frac{2}{9}\mathrm{L}_{[x,y]}z+\frac{1}{9}R_{[x,y]}z.
		\end{eqnarray*}
		The result follows immediately from (\ref{R8}) and (\ref{R9}).
		\item We fix a symplectic basis $(e_i,f_i)$ of $\G$ (i.e $\omega=\sum\limits_i e^i\wedge f^i$) with dual basis $(e^i,f^i)$, by use of the Einstein summation convention we have
		\begin{eqnarray*}
			ric(x,y)&=&\mathrm{tr}\big(z\longmapsto\mathcal{K}(x,z)y\big)\\
			&=&\omega\big(\mathcal{K}(x,e_i)y,e^i\big)-\omega\big(\mathcal{K}(x,f_i)y,f^i\big)\\
			&=&-\dfrac{2}{9}\big(\omega([x,e_i]y,e^i)-\omega([x,f_i]y,f^i)\big)\\
			&=&-\dfrac{2}{9}\big(\omega(x.y.e_i-e_i. x.y,e^i)-\omega(x.y.f_i-f_i. x.y,f^i)\big)\\
			&=&-\dfrac{2}{9}\big(\mathrm{tr}(\mathrm{L}_{x.y})-\mathrm{tr}(R_{x.y})\big).
		\end{eqnarray*}
		A direct computation yields $\mathrm{tr}(\mathrm{R}_{x.y})=2\mathrm{tr}(\mathrm{L}_{x.y})$ and the associativity gives $\mathrm{L}_{x.y}=\mathrm{L}_x\circ \mathrm{L}_y$, for all $x$, $y\in\G$ then 
		\[ric(x,y)=\frac{2}{9}tr\big(\mathrm{L}_x\circ \mathrm{L}_y\big)=\frac{2}{9}\mathrm{tr}\big(\mathrm{ad}_x\circ \mathrm{ad}_y\big).\]
	\end{enumerate}
\end{proof}
\begin{remark}
	For a general study of the Ricci curvature of the symplectic conection associated with the affine connection of a symplectic Lie group see \cite{A-S}.	
\end{remark}
\section*{Acknowledgments:} 
The authors are sincerely grateful to Professor Boucetta for his involvement in improving this article.


\begin{thebibliography}{99}
	\bibitem{A-S}	
	\newblock D. V. Alekseevsky, A. Santi
	\newblock \emph{Homogeneous symplectic 4-manifolds and finite
		dimensional Lie algebras of symplectic vector fields
		on the symplectic 4-space}
	\newblock  Moscow Mathematical Journal 20(2) (2020), 217-256.
	
	\bibitem{B}	
	\newblock D. Burde 
	\newblock \emph{Left-symmetric algebras, or pre-Lie algebras in geometry and physics},
	\newblock Cent. Eur. J. Math. 4 (2006), 323-357.
	
	
	
	\bibitem{B-B}	
	\newblock S. Benayadi and M. Boucetta 
	\newblock \emph{Special bi-invariant linear connections on Lie groups and finite dimensional Poisson structures},
	\newblock Diff. Geo. and its. App.
	36, (2014),  66-89.
	
	\bibitem{B-C}
	\newblock O. Baues and V. Cortés	
	\newblock \emph{Symplectic Lie groups I-III},
	\newblock  2013arXiv1307.1629B.
	
	\bibitem{B-C-G}
	\newblock P. Bieliavsky, M. Cahen, S. Gutt, J. Rawnsley, L.Schwachhöfer 
	\newblock \emph{Symplectic connections}, 
	\newblock International Journal of Geometric Methods in
	Modern Physics, Vol. 3, No. 3 (2006) 375-420.
	
	\bibitem{B-D}	
	\newblock D. Burde  and K. Dekimpe 
	\newblock \emph{Novikov structures on solvable Lie algebras},
	\newblock J.  Geom. and Phys. 56 (2006) 1837-1855.
	
	
	
	
	
	\bibitem{B-G}
	\newblock C. Bai and  A. de Graaf,	
	\newblock \emph{Classification of Novikov algebras},
	\newblock   Applicable Algebra in Engineering, Communication and Computing,  24(1),(2013), 1-15.
	
	
	
	\bibitem{B-M}
	\newblock C. Bai and D. Meng,	
	\newblock \emph{The classification of Novikov algebras in low dimensions},
	\newblock   J. Phys. A, 34 (8), (2001), 1581-1594.
	
	
	
	
	\bibitem{B-M2}
	\newblock C. Bai and D. Meng,	
	\newblock \emph{Transitive Novikov algebras on four-dimensional nilpotent Lie algebras}
	\newblock   Internat. J. Theoret. Phys., 40(10), (2001) 1761-1768.
	
	
	\bibitem{B-N}
	\newblock A. A. Balinskii and S. P. Novikov,	
	\newblock \emph{Poisson brackets of hydrodynamic type, Frobenius
		algebras and Lie algebras},
	\newblock   Soviet Math. Dokl. 32 (1985), 228-231.	
	
	
	\bibitem{C}
	\newblock 	B. Chu,
	\newblock \emph{Symplectic homogeneous spaces},
	\newblock Trans. Amer. Math. Soc. 197 (1974) 145-159.
	
	\bibitem{D-M}	
	\newblock J.M. Dardié and A. Medina, 
	\newblock \emph{ Double extension symplectique d'un groupe de Lie symplectique},
	\newblock Adv.	Math. 117 (2) (1996) 208-227.
	
	\bibitem{D-G}
	\newblock I. Dorfman and I. M. Gel'fand,
	\newblock \emph{ Hamiltonian operators and algebraic structures associated with them},
	\newblock Funktsional. Anal. i Prilozhen. 13 (1979), 13-30.
	
	
	\bibitem{F}
	\newblock 	M. Fischer
	\newblock \emph{ Symplectic Lie algebras with degenerate center},
	\newblock  J.  Alg.  521, (2019), 257-283.
	
	\bibitem{Fo}
	\newblock D. F. Fox
	\newblock \emph{Remarks on symplectic sectional curvature},
	\newblock Diff. Geo. and it's App. 50. (2017), 52-70.
	
	\bibitem{H}
	\newblock J. Helmstetter
	\newblock \emph{ Radical d'une algèbre symétrique a gauche}, \newblock Ann. Inst. Fourier 29 (1979), 17-35.	
	
	\bibitem{K-G-M}
	\newblock Y. Khakimdjanov, M. Goze, A. Medina,
	\newblock \emph{Symplectic or contact structures on Lie groups},
	\newblock Differential Geom. Appl. 21 (2004), no. 1, 41-54.
	
	
	\bibitem{L-M}
	\newblock A. Lichnerowitz, A. Medina,
	\newblock \emph{On Lie groups with left invariant symplectic in K\"ahlerian structures},
	\newblock  Lett. Math. Phys, 16 (1988), 225-235.
	
	\bibitem{M}	
	\newblock A. Medina, 
	\newblock \emph{Flat left-invariant connections adapted to the automorphism structure of a Lie group}, 
	\newblock	J. Diff. Geom. 16  (1981), 445-474.
	
	
	\bibitem{M-R}
	\newblock A. Medina and P. Revoy,
	\newblock \emph{Groupes de Lie á structure symplectique invariante},
	\newblock Symplectic geometry, groupoids, and integrable systems (Berkeley, CA, 1989), 247-266, Springer, New York, 1991.
	
	\bibitem{O}	
	\newblock G. Ovando,
	\newblock \emph{Four dimensional symplectic Lie algebras},
	\newblock Contribution to Algebra and Geometry. Vol. 47 (2006), N 2, 419-434.
	
	
	\bibitem{Z}	
	\newblock E. Zelmanov,
	\newblock \emph{On a class of local translation invariant Lie algebras},
	\newblock Soviet Math. Dokl. 35 (1987), 216-218.
	
\end{thebibliography}
\end{document}